\renewcommand{\MR}[1]{}
\newcommand{\tr}{\operatorname{tr}}
\newcommand{\dd}{\partial}
\newcommand{\db}{\bar \partial}
\title[Ideal membership in $H^\infty$: Toeplitz corona approach]
{Ideal membership in $H^\infty$: \\ Toeplitz corona approach}
\author{Michael Hartz}
\address{Michael Hartz, Department of Mathematics, Washington University in St. Louis, One Brookings Drive,
St. Louis, MO 63130, USA}
\email{mphartz@wustl.edu}
\thanks{M.H. was partially supported by a Feodor Lynen Fellowship.}
\author{Brett D. Wick}
\address{Brett D. Wick, Department of Mathematics, Washington University in St. Louis, One Brookings Drive,
St. Louis, MO 63130, USA}
\email{wick@math.wustl.edu}
\thanks{B.D.W. research supported in part by National Science Foundation DMS grant \#1560955.}
\subjclass[2010]{Primary 30H05; Secondary 46J20, 30H80}
\keywords{Corona problem, ideal membership, Carleson measure, Nevanlinna--Pick}
\begin{document}

\begin{abstract}
  We study the ideal membership problem in $H^\infty$ on the unit disc. Thus, given functions $f,f_1,\ldots,f_n$
  in $H^\infty$, we seek sufficient conditions on the size of $f$ in order for $f$ to belong to the ideal
  of $H^\infty$ generated by $f_1,\ldots,f_n$. We provide a different proof of a theorem of
  Treil, which gives the sharpest known sufficient condition.
  To this end, we solve a closely related problem in the Hilbert space $H^2$, which is equivalent
  to the ideal membership problem by the Nevanlinna--Pick property of $H^2$.
\end{abstract}

\maketitle

\section{Introduction}

Let $f,f_1,\ldots,f_n$ belong to $H^\infty$, the algebra of all bounded analytic functions
in the open unit disc $\bD$ in $\bC$. The ideal membership problem in $H^\infty$
is the following problem: Find conditions on the size of $f$ which imply that
there exist $g_1,\ldots,g_n \in H^\infty$ such that
\begin{equation*}
  f = \sum_{k=1}^n f_k g_k,
\end{equation*}
that is, $f$ belongs to the ideal of $H^\infty$ generated by $f_1,\ldots,f_n$.
If $f$ is the constant function $1$, this is the corona problem for $H^\infty$. In this
case, Carleson's corona theorem \cite{Carleson62} asserts that $1$ belongs to the ideal generated
by $f_1,\ldots,f_n$ if and only if there exists $C > 0$ such that
\begin{equation*}
  1 \le C \sum_{k=1}^n |f_k(z)| \quad \tfa z \in \bD.
\end{equation*}
For general $f \in H^\infty$, the existence of $C > 0$ such that
\begin{equation}
  \label{eqn:f_naive}
  |f(z)| \le C \sum_{k=1}^n |f_k(z)| \quad \tfa z \in \bD
\end{equation}
is clearly a necessary condition for $f$ to belong to the ideal generated by $f_1,\ldots,f_n$.
But an example of Rao \cite{Rao67} (see also \cite[Chapter VIII, Exercise 3]{Garnett07}) shows
that Condition \eqref{eqn:f_naive} is not sufficient.
This suggests the following problem.

\begin{prob}
  \label{prob:ideal}
  Let $\varphi: [0,\infty) \to [0,\infty)$ be a non-decreasing function.
  For $f,f_1,\ldots,f_n \in H^\infty$, does the condition
  \begin{equation*}
    |f(z)| \le \varphi \left(  \left( \sum_{k=1}^n |f_k(z)|^2 \right)^{1/2} \right) \quad
    \text{ for all } z \in \bD
  \end{equation*}
  imply that $f$ belongs to the ideal of $H^\infty$ generated by $f_1,\ldots,f_n$?
\end{prob}

It is classically known that for functions of the form $\varphi(s) = s^p$, Problem \ref{prob:ideal}
has a negative answer if $p < 2$ and a positive answer if $p > 2$.
In particular, the case $p=3$ of this result shows
that Condition \eqref{eqn:f_naive} implies that $f^3$ belongs to the ideal generated
by $f_1,\ldots,f_n$, which was first proved by Wolff (see \cite[Chapter VIII, Theorem 2.3]{Garnett07}).
A theorem of Treil \cite{Treil02} shows that Condition \eqref{eqn:f_naive} does not imply
that $f^2$ belongs to the ideal generated by $f_1,\ldots,f_n$. Hence, Problem \ref{prob:ideal}
has a negative answer for the function $\varphi(s) = s^2$.

Many authors have studied Problem \ref{prob:ideal} for
functions $\varphi$ which grow slightly more slowly at $0$ than $t \mapsto t^2$, including
Cegrell \cite{Cegrell90,Cegrell94}, Lin \cite{Lin93}, Pau \cite{Pau05} and Trent \cite{Trent05}.
The best known result to date is due to Treil \cite{Treil07}.

\begin{thm}[Treil]
  \label{thm:main}
  Let $\psi$ be a non-increasing non-negative function on $[0,\infty)$
  which satisfies $\int_M^\infty \psi(s) \, ds < \infty$ for some $M > 0$.
  Let $\varphi(s) = s^2 \psi(\log s^{-2})$.
  Let $f,f_1,\ldots,f_n \in H^\infty$
  with
  \begin{equation*}
  |f(z)| \le \varphi \left(  \left( \sum_{k=1}^n |f_k(z)|^2 \right)^{1/2} \right) \quad
  \text{ for all } z \in \bD.
  \end{equation*}
  Then $f$ belongs to the ideal generated by $f_1,\ldots,f_n$.
\end{thm}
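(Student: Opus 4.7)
The plan is to follow the strategy announced in the abstract: use the Nevanlinna--Pick property of $H^2$ to reduce the ideal membership problem to a Hilbert-space problem, and then solve that problem via a weighted Wolff/$\db$-construction. Because $H^2$ is a complete Nevanlinna--Pick space, the ideal version of the Toeplitz corona theorem asserts that the conclusion of Theorem~\ref{thm:main} is equivalent to the existence of a constant $C$ such that, for every $h \in H^2$, there exist $g_1,\dots,g_n \in H^2$ with
\begin{equation*}
  \sum_{k=1}^n f_k g_k = f h \qquad\text{and}\qquad \sum_{k=1}^n \|g_k\|_{H^2}^2 \le C\, \|h\|_{H^2}^2 .
\end{equation*}
The first step is to invoke this reduction; everything afterwards is the uniform construction of the $g_k$.

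For the Hilbert-space problem, I would use the Wolff--Koszul $\db$-scheme. Put $Q = \sum_j |f_j|^2$ and consider the smooth ``inverses'' $\varphi_k = \overline{f_k}/Q$, which satisfy $\sum_k f_k \varphi_k = 1$ and $\sum_k f_k \db \varphi_k = 0$. Wolff's antisymmetrization then yields holomorphic $u_k = \varphi_k - \sum_j f_j(b_{jk} - b_{kj})$ with $\sum_k f_k u_k = 1$, provided one can solve $\db b_{jk} = \varphi_j\, \db \varphi_k$ with adequate control. Multiplying by $fh$ gives the candidate $g_k = f h u_k$, which automatically satisfies $\sum_k f_k g_k = f h$. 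The required $H^2$-bound on $g_k$ then reduces to the pointwise bound $\|f u_k\|_\infty \le C$, and thereby to $\|f b_{jk}\|_\infty \le C$ for each pair $j,k$.

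The main obstacle, and the analytic heart of the theorem, is this bound $\|f b_{jk}\|_\infty \le C$. By Jones' explicit formula for $\db$ on the disc and its $H^\infty$-boundedness criterion, the bound is equivalent to the Carleson measure property of
\begin{equation*}
  d\mu_{jk}(z) = |f(z)|^2\, |\varphi_j(z)|^2\, |\db \varphi_k(z)|^2\, (1-|z|^2)\, dA(z) ,
\end{equation*}
with Carleson norm depending only on $\psi$. Expanding $\db \varphi_k$ and invoking the hypothesis $|f| \le Q\, \psi(\log Q^{-1})$ shows that the density of $d\mu_{jk}$ is dominated by $\psi(\log Q^{-1})^2\, |\nabla \log Q|^2\, (1-|z|^2)$. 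The remaining task is to verify the Carleson bound with norm at most a multiple of $\int_M^\infty \psi(s)\,ds$. I would proceed by Green's-theorem integration by parts over a Carleson square, using the subharmonicity of $\log Q$, followed by a co-area change of variables $t = \log Q^{-1}$ that reduces the spatial integral to a one-dimensional integral controlled by $\int \psi(t)\,dt$. This weight mechanism provides exactly the small logarithmic margin needed to pass below the borderline $\varphi(s) = s^2$, which Treil's own counterexample shows to fail. Combining the Carleson estimate, the $\db$-solution, the Wolff correction, and the Toeplitz corona reduction then yields the theorem.
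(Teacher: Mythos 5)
Your first step---the Leech/Toeplitz corona reduction of ideal membership to the $H^2$ problem of solving $\sum_k f_k g_k = fh$ with $\sum_k\|g_k\|_{H^2}^2 \le C\|h\|_{H^2}^2$---matches the paper exactly. But from there the proposal goes off track, and the gap is precisely at the point where the theorem is hard. First, by demanding $\|f u_k\|_\infty \le C$ you throw away the entire benefit of the reduction: a uniform bound on $f u_k$ would produce an $H^\infty$ solution of $\sum_k f_k (f u_k) = f$ directly, so you are in effect attempting the classical Wolff $L^\infty$ route, for which the $H^2$ step is irrelevant. Second, that route is known not to reach the critical exponent. Your claim that the $L^\infty$ bound on the $\db$-solution is ``equivalent'' to the Carleson property of $|f|^2|\varphi_j|^2|\db\varphi_k|^2(1-|z|^2)\,dA$ is not correct: Jones' criterion requires the $L^1$-type condition that $|f\varphi_j\db\varphi_k|\,dA$ be Carleson, and it is exactly this condition that costs a Cauchy--Schwarz square root of the weight $\psi$ and stalls the method at $\varphi(s)=s^2(\log s^{-2})^{-\alpha}$ with $\alpha\ge 3/2$ --- the obstruction Trent explicitly records and which the paper is written to circumvent. (Also, your pointwise domination by $|\nabla\log Q|^2(1-|z|^2)$ is not a usable Carleson density: the quantity controlled by Green's theorem is the Laplacian $\widetilde\Delta\log Q = \|F\|^{-4}(\|F\|^2\|F'\|^2-|F'F^*|^2)$, not $\|F'\|^2/Q$, and the latter need not give a Carleson measure.) So the proposal contains no mechanism for getting below $\alpha=3/2$, let alone to the integrability condition $\int^\infty\psi<\infty$.

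The paper's proof stays entirely in $L^2$ and never solves a $\db$-equation with sup-norm control. It takes the explicit non-analytic solution $G_0=\Phi f g$ with $\Phi=\|F\|^{-2}F^*$, and corrects it by duality: one must bound the functional $\xi\mapsto\int_{\bD}\dd[\langle\db\Phi f g,\xi\rangle]\,d\mu$ over $\xi=\Pi h$, $h\in((H^2)^n)^\perp$, which splits into three terms each handled by Cauchy--Schwarz against Carleson measures. Two of the needed embeddings are Treil's; the decisive new one is the Carleson bound for $\|F\|^{-6}|f|^2|(Ff^{-1/2})'F^*|^2|g|^2\,d\mu$, proved by applying a non-subharmonic variant of Uchiyama's lemma to $\alpha=|f|/\|F\|^2$ (which the hypothesis $|f|\le\varphi(\|F\|)$ keeps bounded by $1$). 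This estimate, which exploits the factor $f^{1/2}$ distributed between the two Cauchy--Schwarz factors, is what achieves the critical exponent; nothing playing its role appears in your outline.
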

In particular, Theorem \ref{thm:main} applies for every $\varepsilon > 0$
to the function $\psi(s) = s^{-1 - \varepsilon}$,
and hence Problem \ref{prob:ideal} has a positive answer for the function
\begin{equation*}
  \varphi(s) = \frac{s^2}{(\log s^{-2})^{1+\varepsilon}}.
\end{equation*}

Treil's proof of Theorem \ref{thm:main} uses a lemma of Nikolski and a version of Nehari's
theorem to reduce Theorem \ref{thm:main} to showing boundedness of a certain bilinear form,
see also the concluding remarks in \cite[Section 4]{Treil07}.

The purpose of this note is to provide a different proof of Treil's theorem.
Our proof uses the Nevanlinna--Pick property of the Hardy space $H^2$ to translate
the ideal membership problem in $H^\infty$ into a Hilbert space problem in $H^2$.
In the case of the corona problem, this translation is known as the Toeplitz corona theorem.
To solve the Hilbert space problem, we adapt the arguments of Treil and the second author of \cite{TW05}.
Specifically, Theorem \ref{thm:main} is reduced to showing boundedness
of a certain linear functional.
The arguments then follow along the lines of Wolff's proof of the corona theorem
\cite{Gamelin80}. As Treil points out in \cite[Section 4]{Treil07}, his proof uses a different approach.

We remark that the idea to translate the ideal membership problem in $H^\infty$
into a Hilbert space problem in $H^2$ already appeared in Trent's paper \cite{Trent05}.
However, Trent's result is somewhat weaker than Theorem \ref{thm:main}. Indeed, he
remarks \cite[p. 574]{Trent05} that with his technique, it is not possible to decrease the exponent
$\alpha$ in $\varphi(s) = \frac{s^2}{(\log s^{-2})^\alpha}$ below $\alpha = \tfrac{3}{2}$.
The main new ingredient of our proof is a better Carleson measure estimate (Lemma \ref{lem:carleson_remaining}),
which allows us to obtain Treil's theorem, in which the critical exponent is $\alpha = 1$.

The remainder of this note is organized as follows. In Section \ref{sec:red_hilbert},
we deduce from a theorem of Leech a generalization of the Toeplitz corona theorem, which
allows us to translate the ideal membership problem into a problem about the Hilbert space $H^2$.
In Section \ref{sec:red_integral}, we reduce our Hilbert space problem to showing
boundedness of a certain integration functional. This is done by adapting the arguments of
\cite{TW05} to the present setting. In Section \ref{sec:carleson_measure},
we collect Carleson measure estimates and embedding results which will
play a key role in establishing boundedness of the integration functional.
Several of these results are due to Treil \cite{Treil07}.
As mentioned above, the main new ingredient is Lemma \ref{lem:carleson_remaining}, which asserts that
a certain measure constructed from the data of the ideal membership problem is Carleson.
Finally, in Section \ref{sec:integral_estimate}, we finish the proof of Theorem \ref{thm:main}
by showing that the integration functional is bounded.

\section{Reduction to a Hilbert space problem}
\label{sec:red_hilbert}

It will be convenient to restate the ideal membership problem using vector-valued functions.
If $\cE,\cF$ are Hilbert spaces, we let $H^\infty(\cE,\cF)$ denote the space
of bounded analytic functions on $\bD$ with values in $B(\cE,\cF)$, the space
of bounded linear operators from $\cE$ to $\cF$. We endow $H^\infty(\cE,\cF)$
with the supremum norm.
In fact, we will only consider $H^\infty(\bC,\bC^n)$ and $H^\infty(\bC^n,\bC)$, whose
elements can naturally be identified with columns and rows of $H^\infty$ functions, respectively.

We will then prove the following more precise version of Theorem \ref{thm:main}.

\begin{thm}
  \label{thm:main_precise}
  Let $\psi: [0,\infty) \to [0,1]$ be a non-increasing function
  which satisfies $\int_0^\infty \psi(s) \, ds \le 1$
  and let $\varphi(s) = s^2 \psi(\log s^{-2})$.
  Let $F \in H^\infty(\bC^n,\bC)$ be of norm at most $1$
  and let $f \in H^\infty$ such that
  \begin{equation*}
    |f(z)| \le \varphi \left(  ||F(z)|| \right) \quad \text{ for all } z \in \bD.
  \end{equation*}
  Then there exists $H \in H^\infty(\bC,\bC^n)$ with
  \begin{equation*}
    f = F H \quad \text{ and } \quad ||H||_\infty \le C,
  \end{equation*}
  where
  $C$ is an absolute constant.
\end{thm}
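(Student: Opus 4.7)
The plan is to follow the three-step program sketched in the introduction. First, in the spirit of Section \ref{sec:red_hilbert}, I would invoke the generalization of the Toeplitz corona theorem derived from Leech's theorem to convert the $H^\infty$ statement ``$f = FH$ with $\|H\|_\infty \le C$'' into a Hilbert-space statement in $H^2$: for every $h \in H^2$ there exist $h_1,\ldots,h_n \in H^2$ with $fh = \sum_k f_k h_k$ and $\sum_k \|h_k\|_{H^2}^2 \le C^2 \|h\|_{H^2}^2$. By standard duality, this is equivalent to a uniform bound on a certain linear functional defined on $H^2$ by integration against holomorphic data built from $f$ and $F$.

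To exhibit such $h_k$ I would use the Koszul/Wolff trick: start with the smooth (but non-holomorphic) candidate
\begin{equation*}
  H_0(z) = f(z) \frac{F(z)^*}{\|F(z)\|^2},
\end{equation*}
which solves $F H_0 = f$ pointwise off the zero set of $F$, and correct it by solving a $\bar\partial$-problem. Following the approach of \cite{TW05}, this reduces the required estimate to boundedness of an integration functional of Littlewood--Paley type, whose integrand is controlled in size by $\bar\partial H_0$ and hence by the growth bound $|f(z)| \le \varphi(\|F(z)\|)$.

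The third step is the Carleson-measure phase, Section \ref{sec:carleson_measure}, which assembles the embeddings needed to control the integration functional. Several of these are already due to Treil; the main new ingredient is Lemma \ref{lem:carleson_remaining}, asserting that a particular measure constructed from the data of the ideal problem is a Carleson measure on $\bD$. The hypothesis $\varphi(s) = s^2 \psi(\log s^{-2})$ enters at its sharpest here: the factor $s^2$ produces a potential-theoretic bound by standard corona-type computations, while the integrability $\int_0^\infty \psi(s)\,ds \le 1$ absorbs the logarithmic loss. This is precisely what drops the critical exponent from Trent's $\alpha = \tfrac{3}{2}$ to Treil's $\alpha = 1$ in $\varphi(s) = s^2/(\log s^{-2})^\alpha$.

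The principal obstacle is therefore Lemma \ref{lem:carleson_remaining}: it is the genuinely new input, and every slackness in its proof would translate directly into a loss in the exponent of $\psi$. Once that lemma is available, the remaining work in Section \ref{sec:integral_estimate} becomes a Cauchy--Schwarz estimate combined with a dyadic decomposition in the level sets of $\|F(z)\|$; the Carleson embeddings convert each dyadic contribution into an $L^2$-norm of $h$, and the sum telescopes to an absolute constant thanks to the normalizations $\|F\|_\infty \le 1$, $\psi \le 1$, and $\int_0^\infty \psi(s)\,ds \le 1$.
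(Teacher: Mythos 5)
Your outline follows the same three-stage architecture as the paper (Leech/Toeplitz corona reduction to $H^2$, a non-analytic solution $\Phi f g$ corrected via a duality/Green's-formula argument, then Carleson-measure estimates), and you correctly identify Lemma \ref{lem:carleson_remaining} as the crux. But the proposal stops exactly where the new mathematics begins: you never say what the measure in Lemma \ref{lem:carleson_remaining} is, nor how the hypotheses on $\varphi$ produce it. The paper's measure is
\begin{equation*}
  \frac{|f|^2}{\|F\|^6}\bigl|(F f^{-1/2})' F^*\bigr|^2 \, d\mu,
\end{equation*}
and the mechanism is a Laplacian identity (Lemma \ref{lem:measure_laplace}): this density equals $\widetilde\Delta\alpha + \frac{|f|}{\|F\|^2}\widetilde\Delta\beta$ with $\alpha = |f|/\|F\|^2$ and $\beta = \log\|F\|^2$. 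The term with $\widetilde\Delta\alpha$ is handled by a Uchiyama-type estimate (Lemma \ref{lem:carleson_non_subharm}) that does \emph{not} require $\alpha$ to be subharmonic, only $0\le\alpha\le 1$ --- and this bound on $\alpha$ is precisely where $|f|\le\varphi(\|F\|)\le\|F\|^2\psi(\cdot)\le\|F\|^2$ enters. The term with $\widetilde\Delta\beta = \|\partial\Pi\|^2$ is absorbed by Treil's $\psi$-weighted Carleson embedding, which is where $\int_0^\infty\psi\,ds<\infty$ enters. Saying that ``the factor $s^2$ produces a potential-theoretic bound'' and ``the integrability absorbs the logarithmic loss'' gestures at this but supplies no argument; without the splitting into $\widetilde\Delta\alpha$ and $\widetilde\Delta\beta$ there is no visible route to the estimate.

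Two smaller discrepancies with the paper's actual execution. First, the correction step is not literally ``solving a $\bar\partial$-problem'': the paper tests $G_0=\Phi f g$ against co-analytic $h$, applies Green's formula, and uses the Riesz representation theorem to produce $v\in\ker F$ with $G_0-v\in (H^2)^n$; boundedness of the resulting functional on vectors $\xi=\Pi h$ is the statement actually proved (Proposition \ref{prop:main_estimate}). Second, the final section contains no dyadic decomposition in level sets of $\|F\|$ and no telescoping sum; the integrand is split by the product rule into three terms, each estimated by a single Cauchy--Schwarz against one of the three Carleson measures (Lemmas \ref{lem:carleson}, \ref{lem:d_pi_non_analytic}, \ref{lem:carleson_remaining}). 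The level-set/integration-by-parts manipulation of $\psi$ you allude to lives inside Treil's quoted Lemma 2.2, not in the concluding estimate. As it stands, your proposal is a faithful roadmap of the paper's strategy but not a proof: the one lemma you flag as the principal obstacle is left entirely unproved.
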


To see that Theorem \ref{thm:main_precise} implies Theorem \ref{thm:main},
observe that for ideal membership, only the behavior of $\varphi$ near $0$, and hence
only the behavior of $\psi$ near $\infty$ is relevant. In particular, if $\int_M^\infty \psi(s) \, ds < \infty$
and $\psi$ is non-increasing, then by modifying $\psi$ on an interval of finite length and by
multiplying with a positive constant,
we can assume without loss of generality that $\psi \le 1$ and that
$\int_0^\infty \psi(s) \, ds \le 1$.

The algebra $H^\infty$ can be regarded as the multiplier algebra of the Hardy space $H^2$,
which is the reproducing kernel Hilbert space on the unit disc whose reproducing kernel
is the Szeg\H{o} kernel
\begin{equation*}
  \frac{1}{1 - z \ol{w}}.
\end{equation*}
More generally, every $F \in H^\infty(\cE,\cF)$ induces a multiplication operator
$M_F: H^2 \otimes \cE \to H^2 \otimes \cF$ whose norm is the supremum norm of $F$.

The Hardy space $H^2$ is the prototypical example of a so-called complete Nevanlinna--Pick space,
see the book \cite{AM02} for a comprehensive treatment of this circle of ideas.
It is known that the complete Nevanlinna--Pick property of $H^2$ makes it possible to
translate the corona problem into a problem about $H^2$; this result is sometimes referred
to as the Toeplitz corona theorem. It provides an important stepping stone
in the proof of corona theorems for a variety of function algebras, see for example
\cite{CSW11}.
More generally, this translation is possible for the ideal membership
problem, an observation which is also used by Trent \cite[p.\ 580]{Trent05}.

For completeness, we indicate how to obtain such a reduction to a Hilbert space
problem. To this end, we require the following theorem of Leech \cite{Leech14,KR14}, which is
a version of the Douglas lemma for $H^\infty$ (see also \cite[Theorem 8.57]{AM02}).

\begin{thm}[Leech]
  \label{thm:leech}
  Let $\cE_1,\cE_2,\cF$ be Hilbert spaces and let $F_1 \in H^\infty(\cE_1,\cF)$
  and $F_2 \in H^\infty(\cE_2,\cF)$. The following are equivalent:
  \begin{enumerate}[label=\normalfont{(\roman*)}]
    \item There exists $H \in H^\infty(\cE_2,\cE_1)$ of norm at most $1$ such that $F_1 H = F_2$.
    \item The operator inequality $M_{F_1} M_{F_1}^* \ge M_{F_2} M_{F_2}^*$ holds.
  \end{enumerate}
\end{thm}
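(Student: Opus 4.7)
My plan is to prove the two implications separately. The direction (i) $\Rightarrow$ (ii) is immediate: if $F_1 H = F_2$ with $\|H\|_\infty \le 1$, then $M_{F_2} = M_{F_1} M_H$ and $M_H M_H^* \le I$, so
\[
  M_{F_2} M_{F_2}^* = M_{F_1} M_H M_H^* M_{F_1}^* \le M_{F_1} M_{F_1}^*.
\]

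For (ii) $\Rightarrow$ (i), my first step is to reformulate the operator inequality as a kernel positivity condition on $\bD \times \bD$. Writing $k_w(z) = (1 - z \ol{w})^{-1}$ for the Szeg\H{o} kernel and using the elementary identity $M_F^*(k_w \otimes v) = k_w \otimes F(w)^* v$ for any $F \in H^\infty(\cE,\cF)$ and $v \in \cF$, a direct computation gives
\[
  \langle (M_{F_1} M_{F_1}^* - M_{F_2} M_{F_2}^*)(k_w \otimes v), k_z \otimes u\rangle = \frac{\langle (F_1(z) F_1(w)^* - F_2(z) F_2(w)^*) v, u\rangle}{1 - z \ol{w}}.
\]
Since finite linear combinations $\sum c_i\, k_{w_i} \otimes v_i$ are dense in $H^2 \otimes \cF$, condition (ii) is equivalent to the $B(\cF)$-valued kernel
\[
  K(z,w) := \frac{F_1(z) F_1(w)^* - F_2(z) F_2(w)^*}{1 - z \ol{w}}
\]
being positive semidefinite on $\bD \times \bD$.

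With this positivity in hand, I would construct $H$ via commutant lifting. Douglas's factorization lemma applied to (ii) yields a contraction $T : H^2 \otimes \cE_2 \to H^2 \otimes \cE_1$ with $M_{F_1} T = M_{F_2}$; the task is then to replace $T$ by a contraction that also intertwines the coordinate multiplications $M_z \otimes I_{\cE_j}$, since any such intertwining contraction is automatically of the form $M_H$ for some $H \in H^\infty(\cE_2,\cE_1)$ with $\|H\|_\infty \le 1$. The existence of this intertwining lift is exactly the Sz.-Nagy--Foias commutant lifting theorem applied to the shift $M_z$. Alternatively, the positivity of $K$ lets one solve the operator-valued Nevanlinna--Pick problem on each finite subset of $\bD$ and extract a weak-$*$ accumulation point of these finite solutions using normal families.

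The main obstacle is precisely the last step: Douglas's lemma in isolation produces only a bounded operator with no commutation property, so without exploiting the reproducing kernel structure of $H^2$ one cannot guarantee that $T$ is a multiplier. The whole point of the positive kernel reformulation is to bring the machinery of Nevanlinna--Pick interpolation (or commutant lifting for the shift) to bear on what is a priori a bare Hilbert space operator inequality.
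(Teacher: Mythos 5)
The paper does not actually prove this statement: Leech's theorem is quoted as a known result (with references to Leech's note, to Kosi\'nski--Rochberg, and to \cite[Theorem 8.57]{AM02}), so there is no in-paper argument to compare against. Your sketch is the standard proof and is correct in outline. The easy direction (i) $\Rightarrow$ (ii) is complete as written, and your reformulation of (ii) as positive semidefiniteness of the kernel $K(z,w) = (F_1(z)F_1(w)^* - F_2(z)F_2(w)^*)/(1-z\ol{w})$ is correct, via $M_F^*(k_w\otimes v) = k_w\otimes F(w)^*v$ and density of the kernel functions.

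The one place where your argument is looser than it should be is the passage from Douglas's lemma to commutant lifting. Douglas's lemma produces \emph{some} contraction $T$ with $M_{F_1}T = M_{F_2}$, but one cannot simply ``replace'' an arbitrary such $T$ by an intertwining one; commutant lifting requires, as input, an operator that already intertwines compressed shifts on a co-invariant subspace. The correct setup is to define $Y$ on $\cM := \ol{\ran M_{F_1}^*}$ by $Y M_{F_1}^* h = M_{F_2}^* h$; inequality (ii) shows $Y$ is well defined and contractive, $\cM$ is invariant under the backward shift because $M_{F_1}^*$ commutes with it, and the identity $Y M_z^* M_{F_1}^* h = M_z^* Y M_{F_1}^* h$ gives the intertwining relation that commutant lifting actually needs; the lifted contraction is then $M_H^*$ for the desired $H$. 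These verifications are short but they are the content of the step, not a formality. Your alternative route (solving the operator-valued Nevanlinna--Pick problem on finite subsets and passing to a weak-$*$ limit) is equally valid, but it black-boxes the finite-point operator-valued interpolation theorem, which is itself of the same depth as the statement being proved. Since the paper treats the entire theorem as a citation, either of your routes is acceptable at the paper's level of rigor, provided the commutant-lifting application is set up as above.
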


The following lemma provides the desired reduction to a Hilbert space problem.
The Toeplitz corona theorem is the special case $f=1$.

\begin{lem}
  \label{lem:toeplitz_corona}
  Let $f \in H^\infty$ and $F \in H^\infty(\bC^n,\bC)$ and let $C > 0$.
  The following are equivalent:
  \begin{enumerate}[label=\normalfont{(\roman*)}]
    \item There exists $H \in H^\infty(\bC,\bC^n)$ such that
      \begin{equation*}
        f = F H \quad \text { and } ||H||_\infty \le C.
      \end{equation*}
    \item For every $g \in H^2$, there exists $G \in (H^2)^n$ such that
      \begin{equation*}
        f g = F G \quad \text{ and } \quad ||G||_2 \le C ||g||_2.
      \end{equation*}
  \end{enumerate}
\end{lem}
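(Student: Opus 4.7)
The plan is to handle the two implications separately. The direction (i) $\Rightarrow$ (ii) is straightforward: given $H$ as in (i), for each $g \in H^2$ the element $G = H g \in (H^2)^n$ satisfies $F G = f g$ and $||G||_2 \le ||H||_\infty \, ||g||_2 \le C ||g||_2$.

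The substantive direction is (ii) $\Rightarrow$ (i), which I would derive from Leech's theorem (Theorem \ref{thm:leech}). My first step is to upgrade the pointwise hypothesis (ii) to an operator inequality. For each $g \in H^2$, hypothesis (ii) guarantees that the closed affine set $\{G \in (H^2)^n : M_F G = f g\}$ (a translate of $\ker M_F$) is non-empty and meets the ball of radius $C ||g||_2$; let $T g$ denote its unique minimum-norm element, which lies in $(\ker M_F)^\perp$ and satisfies $||T g||_2 \le C ||g||_2$. Since the minimum-norm preimage is unique and $M_F$ is linear, one checks directly that $T$ is linear; hence $T : H^2 \to (H^2)^n$ is a bounded operator with $||T|| \le C$ and $M_F T = M_f$. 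Taking adjoints, $T T^* \le C^2 I$ yields
\begin{equation*}
  M_f M_f^* = M_F T T^* M_F^* \le C^2 M_F M_F^*,
\end{equation*}
which can be rewritten as $M_F M_F^* \ge M_{f/C} M_{f/C}^*$.

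The second step is to apply Leech's theorem with $F_1 = F \in H^\infty(\bC^n, \bC)$ and $F_2 = f/C \in H^\infty(\bC, \bC)$, producing $H' \in H^\infty(\bC, \bC^n)$ of norm at most $1$ with $F H' = f/C$. Setting $H = C H'$ gives the factorization required by (i). The only non-trivial step in the argument is the passage from the pointwise preimage condition (ii) to the global operator inequality; this is handled by the standard minimum-norm preimage construction, after which Leech's theorem supplies the rest.
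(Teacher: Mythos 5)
Your proof is correct, and the overall strategy coincides with the paper's: the forward direction via $G = Hg$, and the converse by establishing the operator inequality $M_F M_F^* \ge C^{-2} M_f M_f^*$ and then invoking Leech's theorem with $F_1 = F$ and $F_2 = f/C$ (the paper phrases this as obtaining $FH = \tfrac{1}{C}f$ with $\|H\|_\infty \le 1$ and rescaling, which is the same thing). The one place where you genuinely diverge is the passage from the pointwise solvability hypothesis (ii) to the operator inequality. The paper does this without constructing any global operator: given $g_0 \in H^2$, it picks the norm-attaining unit vector $g$ with $\|M_f^* g_0\| = \langle M_f^* g_0, g\rangle$, takes the solution $G$ of $M_f g = M_F G$ with $\|G\|_2 \le C$ furnished by (ii), and writes $\|M_f^* g_0\| = |\langle M_F^* g_0, G\rangle| \le C \|M_F^* g_0\|$. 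You instead build the minimum-norm solution operator $T$ with $M_F T = M_f$ and $\|T\| \le C$, and factor $M_f M_f^* = M_F T T^* M_F^* \le C^2 M_F M_F^*$. Your route is valid --- the affine solution set is a closed nonempty translate of $\ker M_F$, the minimum-norm element is $P_{(\ker M_F)^\perp}G_0$ for any particular solution $G_0$, and linearity and the bound $\|Tg\|_2 \le C\|g\|_2$ follow as you say --- but it carries the extra burden of verifying that $T$ is well defined, linear and bounded, which the paper's duality argument sidesteps entirely. What your construction buys is a slightly stronger conclusion (a bounded linear right inverse of $M_F$ composed with $M_f$, not just the quadratic-form inequality), though this extra strength is not needed here.
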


\begin{proof}
  (i) $\Rightarrow$ (ii) Let $G = H g \in (H^2)^n$. Then
  \begin{equation*}
    F G = F H g = f g
  \end{equation*}
  and
  \begin{equation*}
    ||G||_2 \le ||H||_\infty ||g||_2 \le C ||g||_2.
  \end{equation*}

  (ii) $\Rightarrow$ (i)
  Consider the multiplication operators $M_F: (H^2)^n \to H^2$
  and $M_f: H^2 \to H^2$. We claim that
  \begin{equation*}
    M_F M_F^* \ge \tfrac{1}{C^2} M_f M_f^*.
  \end{equation*}
  To this end, let $g_0 \in H^2$ and let $g \in H^2$ be a unit vector
  such that $||M_f^* g_0|| = \langle M_f^* g_0,g \rangle$. The assumption
  implies that there exists $G \in (H^2)^n$ with $M_f g = M_F G$ and $||G||_2 \le C$.
  Then
  \begin{equation*}
    ||M_f^* g_0|| = | \langle M_f^* g_0,g \rangle| = |\langle g_0, M_F G  \rangle|
    = | \langle M_F^* g_0,G \rangle| \le C ||M_F^* g_0||,
  \end{equation*}
  which proves the claim.

  In this setting, Leech's theorem (Theorem \ref{thm:leech}), applied with
  $\cE_1 = \bC^n,\cE_2 = \cF = \bC$ and $F_1 = F$ and $F_2 = f$,
  implies that there exists $H \in H^\infty(\bC,\bC^n)$ with $||H||_\infty \le 1$
  and $F H = \frac{1}{C} f$, as desired.
\end{proof}

An application of Lemma \ref{lem:toeplitz_corona} shows that in order to prove Theorem \ref{thm:main},
it suffices to prove the following result.

\begin{prop}
  \label{prop:hilbert_space_problem}
  Let $\psi: [0,\infty) \to [0,1]$ be a non-increasing function
  which satisfies $\int_0^\infty \psi(s) \, ds < \infty$
  and let $\varphi(s) = s^2 \psi(\log s^{-2})$.
  Let $F \in H^\infty(\bC^n,\bC)$ be of norm at most $1$
  and let $f \in H^\infty$ such that
  \begin{equation*}
    |f(z)| \le \varphi \left(  ||F(z)|| \right) \quad \text{ for all } z \in \bD.
  \end{equation*}
  Then for all $g \in H^2$, there exists $G \in (H^2)^n$
  such that
  \begin{equation*}
    F G = f g \quad \text{ and } \quad ||G||_2 \le C ||g||_2,
  \end{equation*}
  where $C$
  is an absolute constant.
\end{prop}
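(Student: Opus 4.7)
\emph{Proof plan.} The strategy is to mimic Wolff's proof of the corona theorem in the vector-valued setting, as adapted in \cite{TW05}. Fix $g \in H^2$ and introduce the (non-analytic) Koszul candidate
\begin{equation*}
\Phi_k(z) = \frac{\overline{f_k(z)}}{\|F(z)\|^2}, \qquad G^{(0)}_k = \Phi_k\, f g,
\end{equation*}
so that $F G^{(0)} = fg$ pointwise, but $G^{(0)}$ fails to be analytic. The Wolff correction then sets
\begin{equation*}
G_k = \Phi_k f g - \sum_{j=1}^n f_j (B_{jk} - B_{kj}),
\end{equation*}
where $B_{jk}$ is a solution of $\db B_{jk} = \Phi_j\, \db(\Phi_k f g)$. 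A direct computation shows $F G = fg$ and $\db G_k = 0$, so that $G \in (H^2)^n$ provided the $\db$-equations admit solutions with good $L^2$ control.

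To estimate $\|G\|_{(H^2)^n}$, I would argue by duality. Pairing $G$ with an arbitrary $h \in (H^2)^n$ of unit norm and applying Green's formula (as in \cite{TW05}) converts $\langle G, h \rangle$ into area integrals of the shape
\begin{equation*}
\int_{\bD} \db(\text{data})\cdot \overline{h(z)}\, \log \tfrac{1}{|z|}\, dA(z),
\end{equation*}
plus harmless boundary terms. Expanding the derivatives of $\Phi$ produces a main integrand proportional to $|f|\,|g|\,|h|\,\|F'\|/\|F\|^3$ and analogous quadratic-in-$F'$ expressions from the $B_{jk}$. The problem is thereby reduced to showing that a single linear (or bilinear) functional in $g$ and $h$ is bounded on $H^2 \times (H^2)^n$.

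Boundedness in turn rests on two families of Carleson measure estimates assembled in Section \ref{sec:carleson_measure}. The classical ingredient is that $\|F'(z)\|^2 \log |z|^{-1}\, dA$ is an $H^2$-Carleson measure when $\|F\|_\infty \le 1$; together with the Littlewood--Paley identity this absorbs the $h$ factor by Cauchy--Schwarz. The non-classical ingredient is Lemma \ref{lem:carleson_remaining}: the hypothesis $|f(z)| \le \|F(z)\|^2\, \psi(\log\|F(z)\|^{-2})$ should yield a measure involving $|f|/\|F\|^2$ (and the appropriate power of $\|F'\|$) that is Carleson with constant bounded by an absolute multiple of $\int_0^\infty \psi(s)\, ds$. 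The mechanism is to slice $\bD$ along the level sets $\{z : \log\|F(z)\|^{-2} \in [t, t+1)\}$, on each of which $\psi$ is essentially constant and a standard Carleson bound holds with geometric gain in $t$; the sum in $t$ telescopes against $\psi$, producing $\int \psi$.

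The main obstacle will be precisely Lemma \ref{lem:carleson_remaining}. As the introduction stresses, Trent's variant of this approach loses a square-root of the $\psi$-weight (through a premature Cauchy--Schwarz that separates $|f|/\|F\|^2$ into $|f|^{1/2}/\|F\|$), which forces the exponent $\alpha = 3/2$ in the model case $\psi(s) = s^{-1-\varepsilon}$. The sharp estimate should instead keep $|f|/\|F\|^2$ intact and distribute it directly against the $\psi$-slice decomposition, so that the full decay of $\psi$ is harvested. Once this Carleson bound is established, the integral estimate of Section \ref{sec:integral_estimate} is a routine combination of Cauchy--Schwarz, Littlewood--Paley, and Carleson embedding, and delivers $\|G\|_2 \le C\|g\|_2$ with an absolute constant, as required.
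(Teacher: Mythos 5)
Your overall architecture matches the paper's: start from the non-analytic solution $\Phi f g$ with $\Phi = \|F\|^{-2}F^*$, correct it to an analytic one, reduce the norm bound to a duality/Green's-formula integral estimate, and feed that estimate with Carleson embeddings, the crucial one being Lemma \ref{lem:carleson_remaining}. One structural difference: you produce the correction via the explicit Koszul/Wolff antisymmetric terms $f_j(B_{jk}-B_{kj})$ with $\db B_{jk} = \Phi_j\,\db(\Phi_k f g)$, whereas the paper never solves a $\db$-equation at all --- it obtains the correction $v$ abstractly as the Riesz representative of the functional $\xi \mapsto \int_{\bD} \dd[\langle \db\Phi f g,\xi\rangle]\,d\mu$ on vectors of the form $\xi = \Pi h$, $h\in((H^2)^n)^\perp$, which automatically places $v$ in $\ker F$. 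Either route reduces to essentially the same integrals, but the paper's version spares you from actually constructing the $B_{jk}$ with $L^2(\bT)$ boundary control, a step your plan asserts ("provided the $\db$-equations admit solutions with good $L^2$ control") but never supplies; if you keep the Koszul form you must either prove that existence separately or recast the correction term itself as a bounded functional, which collapses back into the paper's argument.

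The genuine gap is in your proposed mechanism for Lemma \ref{lem:carleson_remaining}. Slicing $\bD$ along the level sets of $\log\|F\|^{-2}$ and summing against $\psi$ is, in substance, the proof of Treil's embedding (Lemma \ref{lem:carleson}, i.e.\ \cite[Lemma 2.3]{Treil07}), which controls $\|\dd\Pi\|^2\,\psi(\log\|F\|^{-2})\,|g|^2\,d\mu$. But the measure the sharp argument actually needs is
\begin{equation*}
  \frac{|f|^2}{\|F\|^6}\,\bigl|(Ff^{-1/2})'F^*\bigr|^2\,|g|^2\,d\mu ,
\end{equation*}
whose density contains the derivative of $f$ through $(Ff^{-1/2})' = F'f^{-1/2}-\tfrac12 Ff^{-3/2}f'$. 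The hypothesis $|f|\le\varphi(\|F\|)$ gives no pointwise control of $f'$, so no decomposition in terms of the size of $\|F\|$ alone can see this term; this is exactly the obstruction Treil points out and the reason the naive adaptation of \cite{TW05} stalls at $\alpha=3/2$. The paper's proof instead uses the algebraic identity of Lemma \ref{lem:measure_laplace}, writing the density as $\widetilde\Delta\alpha + \frac{|f|}{\|F\|^2}\widetilde\Delta\beta$ with $\alpha = |f|/\|F\|^2\in[0,1]$ and $\beta=\log\|F\|^2$, then absorbs the $\widetilde\Delta\alpha$ part by a Uchiyama-type estimate with the bounded but non-subharmonic weight $e^\alpha$ (Lemma \ref{lem:carleson_non_subharm}); only the $\widetilde\Delta\beta$ part is handled by the $\psi$-slicing lemma. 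That identity and the $e^\alpha$ device are the new ingredient, and the resulting constant is absolute ($2e^2+e$), not proportional to $\int_0^\infty\psi$. Without this (or an equivalent substitute), your plan does not close.
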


\section{Reduction to an integral estimate}
\label{sec:red_integral}

The purpose of this section is to reduce Proposition \ref{prop:hilbert_space_problem} to an
integral estimate. This argument closely follows \cite[Section 1]{TW05}.
We begin with two simple reductions.

\begin{rem}
  \begin{enumerate}[label=\normalfont{(\alph*)},wide]
    \item 
  Let $F = [f_1,\ldots,f_n] \in H^\infty(\bC^n,\bC)$ and $f \in H^\infty$
  be as in Proposition \ref{prop:hilbert_space_problem}.
  As observed by Treil \cite[Remark 0.4]{Treil07},
  it suffices to prove Proposition \ref{prop:hilbert_space_problem} under the additional
  assumption that $f_1,\ldots,f_n$ have no common inner factor, and hence that $F$ has no zeros.
  Indeed, if
  $\theta$ is the greatest common inner divisor of $f_1,\ldots,f_n$, then one readily checks that
  $\widetilde f := f / \theta$ and $\widetilde F := F / \theta$
  satisfy
  \begin{equation*}
    | \widetilde f(z)| \le \varphi ( ||\widetilde F(z)||) \quad
    \text{ for all } z \in \bD.
  \end{equation*}
  Thus, Proposition \ref{prop:hilbert_space_problem} applied to $\widetilde f$ and $\widetilde F$
  yields for $g \in H^2$ a function $G \in (H^2)^n$ with $\widetilde F G = \widetilde f g$
  and hence $F G = f g$.
\item In addition to assuming that $F$ has no zeros, we may also assume
  that the functions $F,f,g$ of Proposition \ref{prop:hilbert_space_problem}
  are all analytic in a neighborhood of $\ol{\bD}$.
  Indeed, this follows from weak compactness of the unit ball of $H^2$
  and a routine argument using dilations of the involved functions.
  \end{enumerate}
\end{rem}

Let now $F \in H^\infty(\bC^n,\bC), f \in H^\infty$ and $g \in H^2$ be functions as in Proposition
\ref{prop:hilbert_space_problem}
which are analytic in an neighborhood of $\ol{\bD}$. Assume further that $F$ has no zeros
in $\ol{\bD}$, so that $||F||^2$ is bounded below on $\ol{\bD}$.
Define a column of $C^\infty$-functions
\begin{equation*}
  \Phi = F^* (F F^*)^{-1} = \|F\|^{-2} F^*
\end{equation*}
and let
\begin{align*}
  \Pi = I - F^* (F F^*)^{-1} F =  I - \|F\|^{-2} F^* F,
\end{align*}
so that $\Pi(z)$ is the orthogonal projection onto $\ker F(z)$.

Observe that $\Phi$ and $\Pi$ are exactly defined as in \cite[Section 1]{TW05}.
We will require the following identities for $\Phi$ and $\Pi$ from \cite{TW05}.

\begin{lem}
  \label{lem:pi_formulae}
  Let $\Phi$ and $\Pi$ be defined as above. Then
  \begin{align*}
    \Pi \db \Phi &= \db \Phi, \\
    \db \Phi &= - (\dd \Pi)^* \Phi, \\
    \dd \db \Phi &= \dd \Pi \db \Phi + (\dd \Pi)^* \Phi F' \Phi
  \end{align*}
  and
  \begin{equation*}
    ||\dd \Pi||^2 = \frac{||F||^2 ||F'||^2 - |F' F^*|^2}{||F||^4} = ||F||^2 || \db \Phi||^2.
  \end{equation*}
 Moreover, $\Pi \partial \Pi = 0$ and $(\dd \Pi)^* \Pi = 0$.
\end{lem}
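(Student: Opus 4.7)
The plan is to derive all identities from a few structural relations: $F\Phi = 1$, $\Pi\Phi = 0$, $\Pi F^* = 0$, and $F\Pi = 0$ (all immediate from $I - \Pi = \Phi F$ and $FF^* = \|F\|^2$), the holomorphy of $F$ (so $\db F = 0$, $\dd F = F'$, and dually $\dd F^* = 0$, $\db F^* = (F')^*$), and the self-adjointness of $\Pi$, which yields $(\dd\Pi)^* = \db\Pi$.

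The first two identities are short. Applying $\db$ to $F\Phi = 1$ gives $F\db\Phi = 0$, so $\db\Phi$ lies in $\ker F = \operatorname{range}\Pi$; hence $\Pi\db\Phi = \db\Phi$. Applying $\db$ to $\Pi\Phi = 0$ gives $\db\Pi \cdot \Phi + \Pi\db\Phi = 0$; combining with the first identity and $\db\Pi = (\dd\Pi)^*$ yields $\db\Phi = -(\dd\Pi)^*\Phi$.

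The third identity is the main obstacle. Differentiating the first identity with $\dd$ gives
\begin{equation*}
  \dd\db\Phi \;=\; \dd\Pi\cdot\db\Phi + \Pi\dd\db\Phi,
\end{equation*}
reducing matters to showing $\Pi\dd\db\Phi = (\dd\Pi)^*\Phi\cdot F'\Phi$. I would verify this by direct computation from $\Phi = \|F\|^{-2}F^*$: every term of $\dd\db\Phi$ except one is a scalar multiple of $F^*$ and is killed by $\Pi$, leaving $\Pi\dd\db\Phi = -\|F\|^{-4}(F'F^*)\,\Pi(F')^*$. On the right, $(\dd\Pi)^*\Phi = -\db\Phi$ by the second identity and $F'\Phi = \|F\|^{-2}F'F^*$ is a scalar, so $(\dd\Pi)^*\Phi\cdot F'\Phi = -\|F\|^{-2}(F'F^*)\db\Phi$; expanding $\Pi(F')^*$ using $\Pi F^* = 0$ then shows both sides equal $-\|F\|^{-4}(F'F^*)(F')^* + \|F\|^{-6}|F'F^*|^2\, F^*$.

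The remaining claims all follow from a rank-one factorization of $\dd\Pi$. Since $\dd\Pi = -\dd\Phi\cdot F - \Phi F'$ and $\dd\Phi = -\|F\|^{-4}(F'F^*)F^*$, we obtain
\begin{equation*}
  \dd\Pi = F^* \cdot v, \qquad v := \|F\|^{-4}(F'F^*)\,F - \|F\|^{-2} F'.
\end{equation*}
Hence $\|\dd\Pi\|^2 = \|F\|^2\cdot vv^*$, and a short expansion yields $vv^* = \|F\|^{-6}(\|F\|^2\|F'\|^2 - |F'F^*|^2)$, giving the first equality. Since each component of $\db\Phi$ is the conjugate of the corresponding entry of $v$, we have $\|\db\Phi\|^2 = vv^*$, which yields the second. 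Finally, $\Pi\dd\Pi = 0$ and $(\dd\Pi)^*\Pi = 0$ are immediate from the factorization: the left factor $F^*$ is annihilated by $\Pi$ on the left, and the adjoint $v^* F$ has $F$ annihilated by $\Pi$ on the right.
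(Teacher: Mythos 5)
Your proof is correct, and it is genuinely more self-contained than the paper's: the paper imports the first three identities, the formula $\db\Phi F = -(\dd\Pi)^*$, and the relations $\Pi\dd\Pi=0$, $(\dd\Pi)^*\Pi=0$ wholesale from Lemma~1.2 and Corollary~1.3 of Treil--Wick, and only supplies the computation of $\|\dd\Pi\|^2$ (done there by noting $\dd\Pi$ has rank one, so its operator norm equals its Hilbert--Schmidt norm, and then evaluating $\tr(\dd\Pi(\dd\Pi)^*)$ from $\dd\Pi = -F^*(FF^*)^{-1}F'\Pi$). You instead derive everything from the algebraic relations $F\Phi=1$, $\Pi\Phi=0$, $\Pi F^*=0$, $F\Pi=0$ together with analyticity of $F$ and self-adjointness of $\Pi$. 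Your organizing device --- the explicit rank-one factorization $\dd\Pi = F^*v$ with $v=\|F\|^{-4}(F'F^*)F-\|F\|^{-2}F'$ --- is a clean way to get the norm identity and the two annihilation relations in one stroke, and is essentially the same rank-one observation the paper uses, made explicit. All the computations check out: in particular $\Pi\dd\db\Phi=-\|F\|^{-4}(F'F^*)\Pi(F')^*$ and $(\dd\Pi)^*\Phi F'\Phi=-\|F\|^{-2}(F'F^*)\db\Phi$ do agree after expanding $\Pi(F')^*=(F')^*-\|F\|^{-2}\overline{F'F^*}F^*$. One cosmetic slip: since $\db\Phi=-\|F\|^{-4}\overline{F'F^*}F^*+\|F\|^{-2}(F')^*$, you actually have $\db\Phi=-v^*$, not $v^*$; the components of $\db\Phi$ are \emph{minus} the conjugates of the entries of $v$. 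This does not affect $\|\db\Phi\|^2=vv^*$ or anything downstream, but you should fix the sign if you write it out.
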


\begin{proof}
  The first three identities are contained in \cite[Lemma 1.2]{TW05}.
  The same lemma shows that
  \begin{equation}
    \label{eqn:dd_Pi}
    \db \Phi F = - (\dd \Pi)^*,
  \end{equation}
  so that
  \begin{equation*}
  ||\dd \Pi||^2 = || \db \Phi F F^* (\db \Phi)^*||^2 = \|F\|^2 || \db \Phi||^2.
  \end{equation*}
  On the other hand, we also infer from Equation \eqref{eqn:dd_Pi} that
  $\dd \Pi$ has rank $1$,
  so its operator norm coincides with its Hilbert-Schmidt norm.
  According to \cite[Lemma 1.2]{TW05}, we have
  \begin{equation*}
    \dd \Pi = - F^* (F F^*)^{-1} F' \Pi,
  \end{equation*}
  hence
  \begin{align*}
    ||\dd \Pi||^2 &= \tr( \dd \Pi (\dd \Pi)^*)
    = \tr(F^* (F F^*)^{-1} F' \Pi (F')^* (F F^*)^{-1} F) \\
    &= ||F||^{-2} F' \Pi (F')^*
    = \frac{||F||^2 ||F'||^2 - |F' F^*|^2}{||F||^4},
  \end{align*}
  where in the third step, we used the fact that $\tr(A B) = \tr(B A)$ for matrices $A,B$ of
  appropriate sizes.
  The final two identities are contained in \cite[Corollary 1.3]{TW05}.
\end{proof}

Recall that we would like to find $G \in (H^2)^n$ with $F G = f g$.
Note that
\begin{equation*}
  F \Phi f g =  f g,
\end{equation*}
so $G_0 := \Phi f g$ is a non-analytic solution of
this problem.
Moreover, $|f| \le ||F||$ by the assumption of Proposition \ref{prop:hilbert_space_problem},
hence $||G_0||_2 \le ||g||_2$.
We will correct $G_0$ to be analytic. Let $\bT$ denote the unit circle. Then more precisely, we wish to find
$v \in (L^2(\bT))^n$ such that $G_0 -v \in (H^2)^n$ and $v(z) \in \ker F(z)$ for almost every $z \in \bT$,
since then $F (G_0 - v) = F G_0 = f g$.

To this end, we will use Green's formula. In the sequel,
let $A$ denote the planar Lebesgue measure on $\bD$ and define a measure $\mu$ on $\bD$ by
\begin{equation*}
  d \mu = \frac{2}{\pi} \log \frac{1}{|z|} d A.
\end{equation*}
Let $m$ be the normalized Lebesgue measure on $\bT$.
Moreover, we write
\begin{equation*}
  \widetilde \Delta = \frac{1}{4} \Delta = \dd \db.
\end{equation*}

\begin{lem}[Green's formula]
  \label{lem:Green}
  Let $u$ be a function which is $C^2$ in a neighborhood of $\ol{\bD}$. Then
  \begin{equation*}
    \int_\bT u \, dm - u(0) = \int_{\bD} \widetilde \Delta u \, d \mu.
  \end{equation*}
\end{lem}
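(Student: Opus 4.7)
The plan is to reduce Green's formula to a one-dimensional integration of the radial means of $u$, then interchange the order of integration. I would set
\begin{equation*}
  I(r) = \int_\bT u(rz)\, dm(z), \qquad r \in [0,1],
\end{equation*}
so that $I(0) = u(0)$ and $I(1) = \int_\bT u\, dm$. The left-hand side of Green's formula therefore equals $I(1) - I(0) = \int_0^1 I'(r)\, dr$, provided I can express $I'(r)$ in terms of $\Delta u$.

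Next I would compute $I'(r)$ in two standard ways and use whichever is cleanest. Differentiating under the integral sign gives $I'(r) = \int_\bT (\dd_r u)(rz)\, dm(z)$, and multiplying by $r$ turns the integrand into the outward normal derivative of $u$ on the circle $|\zeta| = r$. By the divergence theorem (applied in the disc $\{|\zeta| < r\}$ to $\nabla u$),
\begin{equation*}
  2\pi r I'(r) = \int_{|\zeta| = r} \dd_n u\, ds = \int_{|\zeta| < r} \Delta u\, dA.
\end{equation*}
Alternatively, writing $\Delta$ in polar coordinates and integrating the angular term to zero yields the same identity. Either way, this is a routine step using only that $u \in C^2$ in a neighborhood of $\ol{\bD}$.

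With this formula in hand, I would write
\begin{equation*}
  I(1) - I(0) = \int_0^1 \frac{1}{2\pi r}\int_{|\zeta| < r} \Delta u(\zeta)\, dA(\zeta)\, dr
\end{equation*}
and apply Fubini, which is justified because $\Delta u$ is bounded on $\ol{\bD}$ and the iterated integral of $|\Delta u(\zeta)|$ against $\int_{|\zeta|}^1 r^{-1}\, dr = \log |\zeta|^{-1}$ is finite. Exchanging the order of integration gives
\begin{equation*}
  I(1) - I(0) = \frac{1}{2\pi}\int_{\bD} \Delta u(\zeta) \log \frac{1}{|\zeta|}\, dA(\zeta).
\end{equation*}
Replacing $\Delta$ by $4\widetilde{\Delta}$ and recognizing $\frac{2}{\pi}\log|\zeta|^{-1}\, dA = d\mu$ yields the claimed identity.

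The potentially subtle step is the Fubini interchange together with handling the mild singularity of $\log |\zeta|^{-1}$ at the origin, but since $u$ is assumed $C^2$ on a neighborhood of $\ol{\bD}$ and $\mu$ is a finite measure on $\bD$, there is no real obstacle; the rest of the proof is just routine calculus and the divergence theorem.
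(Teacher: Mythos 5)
Your proof is correct. The paper does not write out a proof of Lemma \ref{lem:Green} at all, but the method it has in mind is visible in the proof of the variant Lemma \ref{lem:green}: apply the symmetric Green identity $\int_G (v\Delta u - u\Delta v)\,dA = \int_{\partial G}(v\,\dd u/\dd n - u\,\dd v/\dd n)\,ds$ with $v = \log\frac{1}{|z|}$ on the annular region obtained by excising a small disc about the origin (where $v$ is singular), and then let the excision radius tend to zero, checking that the boundary terms on the small circle converge to $-u(0)$ and that the flux term vanishes. Your route is genuinely different: you never invoke the second Green identity or confront the singularity of $\log\frac{1}{|z|}$ through boundary terms; instead you differentiate the radial means $I(r)$, use only the divergence theorem on the full disc $\{|\zeta|<r\}$, and absorb the logarithm as the inner integral $\int_{|\zeta|}^1 r^{-1}\,dr$ in a Fubini interchange, which is harmless because the kernel $\log|\zeta|^{-1}$ is integrable against area measure. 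Your argument is arguably the more elementary of the two and gives the constant $\frac{2}{\pi}$ in $d\mu$ transparently; the paper's excision method has the advantage that it extends directly to the situation of Lemma \ref{lem:green}, where $u$ fails to be $C^2$ at finitely many points and one must track what happens on the small circles around those points, which is exactly what the paper needs later.
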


Suppose now that $g \in H^2$ has norm at most $1$.
The requirement that $G_0 - v \in (H^2)^n$ is equivalent to demanding that
\begin{equation}
  \label{eqn:H^2_perp}
  \int_{\bT} \langle G_0, h \rangle dm = \int_{\bT} \langle v,h \rangle \, dm
\end{equation}
for all $h \in ((H^2)^n)^\bot \subset (L^2)^n$.
Observe that every $h \in ((H^2)^n)^\bot$ extends to be co-analytic
in $\bD$ with $h(0) = 0$, and let $D$ denote the dense subspace
of all elements of $( (H^2)^n)^\bot$ that extend to be co-analytic in a neighborhood
of $\ol{\bD}$. Then, clearly, it suffices to verify Equation \eqref{eqn:H^2_perp}
for all $h \in D$. For such $h$, we may apply Green's formula (observe that $G_0$
is $C^\infty$ in a neighborhood of $\ol{\bD}$ as well) to deduce that
\begin{equation*}
  \int_{\bT} \langle G_0,h \rangle dm = \int_{\bD} \widetilde \Delta [ \langle \Phi f g, h \rangle ] d \mu
  = \int_{\bD} \dd [ \langle \db \Phi f g, h \rangle ] d \mu,
\end{equation*}
where we have used that $f,g$ are analytic and $h$ is co-analytic.
Lemma \ref{lem:pi_formulae} shows that
$\Pi \db \Phi = \db \Phi$, so that
\begin{equation*}
  \int_{\bD} \dd [ \langle \db \Phi f g, h \rangle ] d \mu,
   = \int_{\bD} \dd [ \langle \db \Phi f g, \Pi h \rangle ] d \mu
   = \int_{\bD} \dd [ \langle \db \Phi f g, \xi \rangle ] d \mu,
\end{equation*}
where $\xi = \Pi h$. If we can show that there exists a constant $C_0 > 0$ such that
\begin{equation*}
  \left| 
   \int_{\bD} \dd [ \langle \db \Phi f g, \xi \rangle ] d \mu \right|
   \le C_0 ||\xi||_2 \quad \text{ for all } \xi = \Pi h, h  \in ((H^2)^n)^{\bot},
\end{equation*}
then it follows from the Riesz representation theorem that there exists $v \in (L^2)^n$ with $||v||_2 \le C_0$
such that
\begin{equation*}
  \int_{\bD} \dd [ \langle \db \Phi f g, \xi \rangle ] d \mu = \int_{\bT} \langle v,\xi \rangle \, dm
\end{equation*}
for all $\xi = \Pi h$. By replacing $v$ with $\Pi v$,
we may assume that $v$ belongs to $\ran \Pi = \ker F$.
Since the left-hand side equals $\int_{\bT} \langle G_0,h \rangle \, dm$, this
$v$ will then satisfy
\begin{equation*}
  \int_\bT \langle G_0,h \rangle \, dm = \int_{\bT} \langle v, \Pi h \rangle \, dm
  = \int_{\bT} \langle v, h \rangle \, dm
\end{equation*}
for all $h \in D$, so that $G := G_0 - v \in (H^2)^n$ satisfies $F G = f g$.
Moreover,
\begin{equation*}
  ||G||_2 \le ||G_0||_2 + ||v||_2 \le 1 + C_0.
\end{equation*}

Thus, in order to prove Proposition \ref{prop:hilbert_space_problem}, and hence Theorem \ref{thm:main_precise},
it suffices to prove the following result.

\begin{prop}
  \label{prop:main_estimate}
  Let $\psi: [0,\infty) \to [0,1]$ be a non-increasing function
  which satisfies $\int_0^\infty \psi(s) \, ds < \infty$
  and let $\varphi(s) = s^2 \psi(\log s^{-2})$.
  Let $F \in H^\infty(\bC^n,\bC)$ and $f \in H^\infty$ be analytic in a neighborhood
  of $\ol{\bD}$. Suppose that $F$ has no zeros in $\ol{\bD}$ and
  that $||F||_\infty \le 1$. Assume further that
  \begin{equation*}
    |f(z)| \le \varphi(||F(z)||) \quad \text{ for } z \in \bD.
  \end{equation*}
  Then for all
  $g \in H^2$ with $||g||_2 \le 1$,
  the estimate
  \begin{equation*}
    \left| 
     \int_{\bD} \dd [ \langle \db \Phi f g, \xi \rangle ] d \mu \right|
     \le C_0 ||\xi||_2 \quad \text{ for all } \xi = \Pi h, h \in ((H^2)^n)^{\bot}
  \end{equation*}
  holds, where
  $C_0$ is an absolute constant.
  Here, as before, $\Phi = \|F\|^{-2} F^*$
  and $\Pi = I - \|F\|^{-2} F^* F$.
\end{prop}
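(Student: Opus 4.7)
The plan is to adapt the Wolff-type approach of \cite{TW05}: expand $\dd[\langle \db\Phi fg, \xi\rangle]$ via the Leibniz rule, use the algebraic identities of Lemma \ref{lem:pi_formulae} to collapse cross terms, and then bound each remaining term by combining Cauchy--Schwarz in $L^2(d\mu)$ with the Carleson measure estimates of Section \ref{sec:carleson_measure} and standard Littlewood--Paley identities.

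Since $f$ and $g$ are holomorphic, the Leibniz rule gives
\[
  \dd[\langle \db\Phi fg, \xi\rangle] = \langle (\dd\db\Phi)fg, \xi\rangle + \langle \db\Phi f'g, \xi\rangle + \langle \db\Phi fg', \xi\rangle + \langle \db\Phi fg, \db\xi\rangle.
\]
Substituting $\dd\db\Phi = \dd\Pi\,\db\Phi + (\dd\Pi)^*\Phi F'\Phi$, writing $\xi = \Pi h$, and using $\Pi \dd\Pi = 0$, $(\dd\Pi)\Pi = \dd\Pi$ (both consequences of $\Pi^2 = \Pi$ together with Lemma \ref{lem:pi_formulae}), $\db\Phi F = -(\dd\Pi)^*$, and $F\Phi = 1$, the first term collapses: the $\dd\Pi\,\db\Phi$ piece is annihilated by $\Pi$, while the $(\dd\Pi)^*\Phi F'\Phi$ piece simplifies to the scalar multiple $-(F'\Phi)\, fg\, \langle\db\Phi, h\rangle$, where $F'\Phi = \dd \log \|F\|^2$ is a scalar. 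For the last term I would expand $\db\xi = (\db\Pi)h + \Pi\db h$ and use $\Pi\db\Phi = \db\Phi$ to route the pairing, which produces one contribution involving $\db\Pi$ (whose pointwise norm equals $\|F\|\,\|\db\Phi\|$ by Lemma \ref{lem:pi_formulae}) and one involving $\db h$ (controlled by a Littlewood--Paley identity applied to the holomorphic function whose conjugate is $h$).

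For each resulting building block I would apply Cauchy--Schwarz in $L^2(d\mu)$, splitting factors so that one side is controlled by a Carleson measure estimate and the other by either the pointwise bound $|f|\le\varphi(\|F\|)\le 1$ or a Littlewood--Paley identity $\int_\bD |u'|^2 d\mu = \|u\|_2^2 - |u(0)|^2$ applied to $u=f$ or $u=g$. For pairings that come out in terms of $h$ rather than $\xi$, I would exploit $\Pi\db\Phi = \db\Phi$ to rewrite $\langle \db\Phi, h\rangle = \langle \db\Phi, \Pi h\rangle = \langle \db\Phi, \xi\rangle$, so that the resulting bound involves $\|\xi\|_2$ rather than the a priori larger $\|h\|_2$.

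The main obstacle is the term generated by the $(\dd\Pi)^*\Phi F'\Phi$ contribution: it produces a measure built from $|F'|^2\|F\|^{-2}|f|^2$, which without the hypothesis $|f|\le\varphi(\|F\|)$ would fail to be Carleson with an absolute constant. This is precisely where Lemma \ref{lem:carleson_remaining} is invoked in an essential way: it asserts that the $\varphi$-weighted version of this measure is Carleson, with constant controlled by $\int_0^\infty \psi(s)\,ds \le 1$. This refinement over the Carleson estimates available to \cite{Trent05} is what produces the critical exponent $\alpha = 1$ of Treil's theorem. Once Lemma \ref{lem:carleson_remaining} is invoked, summing the bounds from all building blocks yields the required inequality with an absolute constant $C_0$.
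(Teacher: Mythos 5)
Your overall strategy (Wolff-type argument: Leibniz expansion, the identities of Lemma \ref{lem:pi_formulae}, Cauchy--Schwarz against Carleson measures) is the right one, and you correctly identify Lemma \ref{lem:carleson_remaining} as the decisive ingredient. However, the specific decomposition you propose does not close, and the missing idea is precisely the regrouping that makes Lemma \ref{lem:carleson_remaining} applicable. Your four Leibniz terms leave two pieces that none of the available lemmas can handle in isolation. First, the term $\int \langle \db\Phi\, f'g,\xi\rangle\,d\mu$: any Cauchy--Schwarz split of $\|\db\Phi\|\,|f'|\,|g|\,\|\xi\|$ leaves a factor such as $\int \|\db\Phi\|^2\|\xi\|^2|g|^2\,d\mu$ with no $\varphi(\|F\|)$ weight attached, and since $\|\db\Phi\|^2=\|F\|^{-2}\|\dd\Pi\|^2$ this is not a Carleson quantity (the $\|F\|^{-2}$ is unbounded). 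Second, after your (correct) collapse of the $\dd\db\Phi$ term to $-(F'\Phi)fg\langle\db\Phi,\xi\rangle$, estimating it directly requires the measure $|f|\,|F'F^*|^2\|F\|^{-6}|g|^2\,d\mu$ to be Carleson; this is \emph{not} what Lemma \ref{lem:carleson_remaining} asserts (it controls $|f|^2\|F\|^{-6}|(Ff^{-1/2})'F^*|^2$, which contains the logarithmic-derivative correction $-\tfrac12 f'/f\,\|F\|^2$), and it is not dominated by Lemma \ref{lem:carleson} either, since $\|\dd\Pi\|^2=\|F\|^{-4}(\|F\|^2\|F'\|^2-|F'F^*|^2)$ sits on the wrong side of the inequality. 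This failure of the ``naive'' splits is exactly the obstruction that caps Trent's method at $\alpha=3/2$.

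The paper's proof resolves this by writing $f=f^{1/2}\cdot\ol{f^{1/2}}$ inside the pairing and grouping
\begin{equation*}
  \dd\bigl[\langle \db\Phi f^{1/2},\ \ol{f^{1/2}}\xi\rangle g\bigr]
\end{equation*}
into three terms. Half of your $f'$ term joins the $\dd\db\Phi$ term to produce
\begin{equation*}
  \|F\|^{-4} f^{3/2}\,\langle F^*\bigl(Ff^{-1/2}\bigr)'F^*,\ (\dd\Pi)\xi\rangle,
\end{equation*}
which is exactly the combination controlled by Lemma \ref{lem:carleson_remaining} together with the first bound of Lemma \ref{lem:d_pi_non_analytic}; the other half joins the $\db\xi$ term as $\db[\ol{f^{1/2}}\xi]$, which is controlled by the second bound of Lemma \ref{lem:d_pi_non_analytic}. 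Note also that your plan to treat the $\Pi\db h$ contribution by ``a Littlewood--Paley identity applied to $h$'' yields a bound in $\|h\|_2$, which is not controlled by $\|\xi\|_2=\|\Pi h\|_2$ and hence is useless for the duality argument of Section \ref{sec:red_integral}; the bound $\int_{\bD}\|\db[\ol{f^{1/2}}\xi]\|^2\,d\mu\le 4e\|\xi\|_2^2$ is a genuine theorem of Treil relying on $\Pi\dd\Pi=0$, not a Littlewood--Paley identity. The remaining term $\int\langle\db\Phi,\xi\rangle fg'\,d\mu$ is handled as you suggest, via $\int|g'|^2\,d\mu\le\|g\|_2^2$ and the first bound of Lemma \ref{lem:d_pi_non_analytic} after using $\|F\|^{-2}\varphi(\|F\|)^2\le\varphi(\|F\|)$.
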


\section{Carleson measure estimates}
\label{sec:carleson_measure}

As in \cite{TW05}, the proof of Proposition \ref{prop:main_estimate} requires several Carleson measure estimates.
For the remainder of this note, we assume that $\varphi,\psi$ and $f,F,\Pi$
satisfy the assumptions of Proposition \ref{prop:main_estimate}. We also
assume that $f$ is not identically zero, since Proposition \ref{prop:main_estimate} trivially
holds in this case.

The first Carleson measure estimate is given by the following lemma from \cite{Treil07}.

\begin{lem}[Treil]
  \label{lem:carleson}
  Let $\psi,\Pi,F$ be as in Proposition \ref{prop:main_estimate}.
  Then for all $g \in H^2$, the estimate
  \begin{equation*}
    \int_{\bD} ||\dd \Pi||^2 \psi (\log \|F\|^{-2}) |g|^2 \, d \mu
    \le 2 e ||g||_2^2
  \end{equation*}
  holds.
\end{lem}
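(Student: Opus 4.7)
The plan is to reduce the claim to a standard Carleson embedding for log-subharmonic functions, applied to a one-parameter family of comparison functions of the form $\|F\|^{2\alpha}$, combined with a layer-cake decomposition of $\psi$. Set $u := \log \|F\|^2 \le 0$. Since $F = [f_1,\ldots,f_n]$ is analytic, $\|F\|^2 = \sum_k |f_k|^2$ is log-plurisubharmonic, so $u$ is subharmonic on $\bD$, and from the formulas of Lemma~\ref{lem:pi_formulae} (equivalently, by direct computation of $\dd \db \log \|F\|^2$) one has $\widetilde \Delta u = \|\dd \Pi\|^2$. Thus the quantity to estimate is $I := \int_\bD |g|^2 \psi(-u) \widetilde \Delta u \, d\mu$.

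For each $\alpha > 0$, the function $h_\alpha := \|F\|^{2\alpha} = e^{\alpha u}$ is log-subharmonic and satisfies $0 < h_\alpha \le 1$. Direct differentiation gives
\[
\widetilde \Delta h_\alpha \;=\; \alpha\, h_\alpha\, \widetilde \Delta u + \alpha^2 h_\alpha |\dd u|^2 \;\ge\; \alpha h_\alpha \|\dd \Pi\|^2.
\]
Log-subharmonicity of $h_\alpha$ is equivalent to the pointwise inequality $|\db h_\alpha|^2 \le h_\alpha \widetilde \Delta h_\alpha$. Applying Green's formula to $|g|^2 h_\alpha$, using $h_\alpha \le 1$ to bound the boundary integral by $\|g\|_2^2$, and absorbing the resulting cross term via Cauchy--Schwarz (the log-subharmonicity of $h_\alpha$ is precisely what allows this absorption), one obtains a Carleson embedding
\[
\int_\bD |g|^2 \widetilde \Delta h_\alpha \, d\mu \;\le\; C_0 \|g\|_2^2
\]
with an absolute constant $C_0$. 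Dividing by $\alpha$ gives
\[
\int_\bD |g|^2\, h_\alpha \|\dd \Pi\|^2 \, d\mu \;\le\; \frac{C_0}{\alpha}\, \|g\|_2^2 \qquad \text{for every } \alpha > 0.
\]

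The final step is a layer-cake decomposition of $\psi$. For $\lambda \in (0, \psi(0)]$ set $v_\lambda := \sup\{s \ge 0 : \psi(s) > \lambda\}$, so that $\psi(-u) = \int_0^{\psi(0)} \mathbf{1}_{\{-u < v_\lambda\}} \, d\lambda$ and, by layer cake,
\[
\int_0^{\psi(0)} v_\lambda \, d\lambda \;=\; \int_0^\infty \psi(s)\, ds \;\le\; 1.
\]
The key pointwise inequality is
\[
\mathbf{1}_{\{-u < v_\lambda\}} \;=\; \mathbf{1}_{\{h_\alpha > e^{-\alpha v_\lambda}\}} \;\le\; e^{\alpha v_\lambda}\, h_\alpha,
\]
valid for \emph{every} $\alpha > 0$. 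Applying the previous bound and minimising the prefactor $e^{\alpha v_\lambda}/\alpha$ over $\alpha > 0$ at $\alpha = 1/v_\lambda$ gives
\[
\int_\bD |g|^2 \|\dd \Pi\|^2 \mathbf{1}_{\{-u < v_\lambda\}} \, d\mu \;\le\; C_0\, e\, v_\lambda \|g\|_2^2,
\]
and integrating over $\lambda$ yields $I \le C_0 e \|g\|_2^2$. The main obstacle is tracking the sharp constant $C_0$ in the log-subharmonic Carleson embedding: the straightforward $(\sqrt X - \sqrt Y)^2 \le \|g\|_2^2$ argument (with $X = \int |g|^2 \widetilde \Delta h_\alpha \, d\mu$ and $Y = \int h_\alpha |g'|^2 \, d\mu$) delivers $C_0 \le 4$ and therefore the bound $4e\|g\|_2^2$; attaining the announced constant $C_0 = 2$, and hence $2e$, requires a finer bookkeeping of boundary terms but does not alter the structure of the argument above.
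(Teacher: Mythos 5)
Your argument is correct in structure and takes a genuinely different route from the paper, which proves this lemma essentially by citation: it invokes Treil's \cite[Lemma 2.3]{Treil07} together with the identity $\|\dd \Pi\|^2 = \widetilde \Delta \log \|F\|^2$ from Lemma \ref{lem:pi_formulae}, and only inspects Treil's proof of \cite[Lemma 2.2]{Treil07} to extract the numerical constant. You instead give a self-contained proof, and the pieces check out: the identification $\widetilde \Delta u = \|\dd \Pi\|^2$ for $u = \log\|F\|^2$ is exactly the computation in Lemmas \ref{lem:pi_formulae} and \ref{lem:carleson_remaining}; the comparison $\widetilde \Delta h_\alpha \ge \alpha h_\alpha \widetilde\Delta u$, the Uchiyama-type embedding for the log-subharmonic weight $h_\alpha = \|F\|^{2\alpha} \le 1$, and the layer-cake step with the optimization $\alpha = 1/v_\lambda$ are all sound (note $\int_0^{\psi(0)} v_\lambda\, d\lambda = \int_0^\infty \psi(s^+)\, ds = \int_0^\infty \psi(s)\, ds \le 1$ since $\psi$ is monotone, and the boundary case $-u = v_\lambda$ costs nothing because $e\cdot e^{-1} = 1$). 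This is close in spirit to Treil's own argument, which the paper does not reproduce, so your proof has the merit of making the lemma self-contained. Two routine points should be made explicit: Green's formula is applied for $g$ analytic in a neighborhood of $\ol{\bD}$ followed by dilation and Fatou, and $h_\alpha$ is smooth on $\ol{\bD}$ because $F$ is zero-free there (both are standing assumptions of Proposition \ref{prop:main_estimate}).

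The one genuine shortfall is the constant. As you concede, your proof delivers $4e\|g\|_2^2$ rather than the stated $2e\|g\|_2^2$, and the closing claim that $C_0 = 2$ follows from ``finer bookkeeping of boundary terms'' is not substantiated; the inequality $(\sqrt{X}-\sqrt{Y})^2 \le \|g\|_2^2$ really only gives $\sqrt{X} \le \|g\|_2 + \sqrt{Y} \le 2\|g\|_2$, and I do not see how bookkeeping alone pushes the log-subharmonic embedding constant to $2$. A cleaner option within your framework is to apply the paper's Lemma \ref{lem:carleson_non_subharm} to the function $h_\alpha$ itself (it needs no subharmonicity, and here $\widetilde\Delta h_\alpha \ge 0$, so $e^{h_\alpha} \ge 1$ can be dropped), which gives $C_0 = e$ and hence a final constant $e^2$ --- better than $4e$ but still above $2e$. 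For the paper this is immaterial, since the lemma only feeds into an unspecified absolute constant; but as stated your proof does not reach $2e$, so you should either follow Treil's bookkeeping (which the paper records as $C \le e\psi(0) + e\int_1^\infty r\, d(-\psi(r)) \le 2e$) or restate the conclusion with the constant you actually obtain.
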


\begin{proof}
  According to \cite[Lemma 2.3]{Treil07}, there exists a constant $C < \infty$
  such that for all $g \in H^2$, the estimate
  \begin{equation*}
    \int_{\bD} \|F\|^{-4} ( \|F\|^2 \|F'\|^2 - |F' F^*|^2)
    \psi (\log \|F\|^{-2}) |g|^2 \, d \mu
    \le C \int_{\bT} |g|^2 d m
  \end{equation*}
  holds. By Lemma \ref{lem:pi_formulae},
  \begin{equation*}
    ||\dd \Pi||^2 = \|F\|^{-4} ( \|F\|^2 \|F'\|^2 - |F' F^*|^2).
  \end{equation*}
  Moreover, examination of the proof of \cite[Lemma 2.2]{Treil07} shows that we can
  take
  \begin{align*}
    C &\le e \psi(0) + e \int_1^\infty r \, d( - \psi(r)) = e \psi(0) + e \psi(1) + e \int_1^\infty \psi(r) \, d r \\
    &\le e + e \int_0^\infty \psi(r) \, dr \le 2 e,
  \end{align*}
  where we have used integration by parts for the Riemann-Stieltjes integral and the
  fact that $r \psi(r) \to 0$ as $r \to \infty$ since $\psi$ is integrable and non-increasing.
\end{proof}

The following lemma serves as a replacement for \cite[Lemma 2.3]{TW05} and is also taken from \cite{Treil07}.
As explained there, to interpret
\begin{equation*}
  \int_{\bD} || \db [ \ol{f^{1/2}} \xi] ||^2 \, d \mu
\end{equation*}
if $f$ has zeros, observe that away from the finitely many zeros of $f$, this expression
is independent of the choice of branch of square root.

\begin{lem}[Treil]
  \label{lem:d_pi_non_analytic}
  Let $\varphi,\Pi,f,F$ be as in Proposition \ref{prop:main_estimate}.
  Then for any $\xi$ of the form $\xi = \Pi h$, where $h \in ((H^2)^n)^\bot$, the estimate
  \begin{align*}
    \int_\bD \varphi(\|F\|) \, ||\dd \Pi||^2 ||\xi||^2 \, d \mu
    &\le 2 e ||\xi||_2^2 \text{ and } \\
    \int_{\bD} || \db[ \ol{f^{1/2}} \xi] ||^2 \, d \mu &\le 4 e ||\xi||_2^2
  \end{align*}
  holds.
\end{lem}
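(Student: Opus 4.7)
The plan is to deduce the second inequality from the first via a short Green's formula argument, and to prove the first inequality by adapting the proof of Lemma \ref{lem:carleson} (that is, of Treil's Lemma 2.3 in \cite{Treil07}) to the non-analytic vector-valued function $\xi$. The main obstacle is the first inequality; the second then follows with a short computation.

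To deduce the second from the first, set $\eta = \overline{f^{1/2}} \xi$, defined away from the finitely many zeros of $f$. Since $\overline{f^{1/2}}$ is anti-analytic, $\dd \eta = \overline{f^{1/2}} \dd \xi$, so $\|\dd \eta\|^2 = |f| \|\dd \xi\|^2$. Using the orthogonality relations $\Pi \dd \Pi = 0$ and $(\dd \Pi)^* \Pi = 0$ from Lemma \ref{lem:pi_formulae}, one calculates $\langle \dd \xi, \xi \rangle = 0$ and $\langle \widetilde \Delta \xi, \xi \rangle = -\|\dd \xi\|^2$, which together give $\mathrm{Re} \langle \widetilde \Delta \eta, \eta \rangle = -|f| \|\dd \xi\|^2$. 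Plugging into the standard expansion $\widetilde \Delta \|\eta\|^2 = \|\db \eta\|^2 + \|\dd \eta\|^2 + 2 \mathrm{Re} \langle \widetilde \Delta \eta, \eta \rangle$ and applying Green's formula (Lemma \ref{lem:Green}) to $\|\eta\|^2 = |f| \|\xi\|^2$ (which vanishes at $0$, since $\xi(0) = 0$) yields
\begin{equation*}
\int_\bD \|\db \eta\|^2 \, d\mu = \int_\bT |f| \|\xi\|^2 \, dm + \int_\bD |f| \|\dd \xi\|^2 \, d\mu.
\end{equation*}
The first summand is at most $\|\xi\|_2^2$, since $\|f\|_\infty \le \varphi(1) = \psi(0) \le 1$. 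For the second, the identity $\dd \Pi = (\dd \Pi) \Pi$ gives $\dd \xi = (\dd \Pi) \xi$ and hence $\|\dd \xi\|^2 \le \|\dd \Pi\|^2 \|\xi\|^2$; combined with $|f| \le \varphi(\|F\|)$ and the first inequality, this yields $\int_\bD |f| \|\dd \xi\|^2 \, d\mu \le 2 e \|\xi\|_2^2$. Adding and using $1 + 2 e \le 4 e$ produces the second inequality.

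For the first inequality, since $\|F\|^2 \le 1$ one has $\varphi(\|F\|) \le \psi(\log \|F\|^{-2})$, so Lemma \ref{lem:carleson} already shows that the measure $\varphi(\|F\|) \|\dd \Pi\|^2 \, d\mu$ is Carleson with constant at most $2 e$. A naive component-wise application to the co-analytic $h$ yields only $\int \varphi(\|F\|) \|\dd \Pi\|^2 \|\xi\|^2 \, d\mu \le 2 e \|h\|_2^2$, which is too weak because $\|h\|_2^2 = \|\xi\|_2^2 + \int_\bT \|F\|^{-2} |Fh|^2 \, dm$ may strictly exceed $\|\xi\|_2^2$. To recover the sharp bound in terms of $\|\xi\|_2^2$, my plan is to apply Green's formula directly to a weighted scalar function of the form $\Psi(\|F\|^2) \|\xi\|^2$, where $\Psi$ is a primitive of $\psi$ constructed as in the proof of Lemma \ref{lem:carleson} (Treil \cite[Lemma 2.2]{Treil07}), and to use the identity $\widetilde \Delta \|\xi\|^2 = \|\db \xi\|^2 - \|\dd \xi\|^2$ together with $\langle \dd \xi, \xi \rangle = 0$ to handle the cross terms.

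The main obstacle is controlling the additional terms that appear because $\xi$ is not analytic: in particular, the cross terms $\dd \Psi(\|F\|^2) \cdot \db \|\xi\|^2$ and the negative contribution $-\Psi(\|F\|^2) \|\dd \xi\|^2$ arising from $\widetilde \Delta \|\xi\|^2 = \|\db \xi\|^2 - \|\dd \xi\|^2$. Using the rank-one identity $\|\dd \xi\|^2 = \|(\dd \Pi) \xi\|^2 \le \|\dd \Pi\|^2 \|\xi\|^2$ to bound the latter, these error terms should be absorbable into the favorable principal term, matching the constant $2 e$ by the integrability condition $\int_0^\infty \psi \le 1$ as in Treil's original argument.
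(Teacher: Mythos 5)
The crux of this lemma is the first estimate, and there your proposal is a plan rather than a proof — and the plan, as written, does not close. Set $v=\Psi(\log\|F\|^{-2})$ with $\Psi(t)=\int_t^\infty\psi(s)\,ds$, so that $0\le v\le 1$ and $\widetilde\Delta v\ge\psi(\log\|F\|^{-2})\|\dd\Pi\|^2\ge\varphi(\|F\|)\|\dd\Pi\|^2$. Using $\langle\dd\xi,\xi\rangle=0$ and $\widetilde\Delta\|\xi\|^2=\|\db\xi\|^2-\|\dd\xi\|^2$, Green's formula applied to $v\|\xi\|^2$ gives
\begin{equation*}
\int_\bD(\widetilde\Delta v)\,\|\xi\|^2\,d\mu\;\le\;\|\xi\|_2^2\;+\;\int_\bD v\,\|\dd\xi\|^2\,d\mu\;-\;\int_\bD v\,\|\db\xi\|^2\,d\mu\;-\;2\int_\bD\operatorname{Re}\bigl(\dd v\,\langle\db\xi,\xi\rangle\bigr)\,d\mu,
\end{equation*}
and neither of the last two error terms is absorbable into the left-hand side. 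The term $\int v\|\dd\xi\|^2\,d\mu$ is bounded by $\int\Psi(\log\|F\|^{-2})\|\dd\Pi\|^2\|\xi\|^2\,d\mu$, but $\Psi$ is in general much larger than $\psi$ (e.g.\ $\psi(t)\sim t^{-2}$ gives $\Psi(t)\sim t^{-1}$), so this is essentially the \emph{unweighted} integral $\int\|\dd\Pi\|^2\|\xi\|^2\,d\mu$; the measure $\|\dd\Pi\|^2\,d\mu$ need not be Carleson — the weight $\psi$ is present precisely because it is not. The cross term is worse still: $|\dd v|=\psi(\log\|F\|^{-2})\,\|F\|^{-2}|F'F^*|$ is not controlled by $\|\dd\Pi\|$, and $\db\xi$ contains the summand $\Pi\,\db h$, which is not pointwise dominated by anything of the form $\|\dd\Pi\|\,\|\xi\|$. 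Handling exactly these terms is the hard part of Treil's Section 2 (his Lemmas 2.4--2.7 leading to Corollary 2.8), and it requires a mechanism genuinely different from the Uchiyama-type computation behind Lemma \ref{lem:carleson}. The paper does not reprove this: its proof consists of quoting Treil's Corollary 2.8, checking that it applies to the present $\Pi$ (which differs from Treil's $\Pi'$ via $\Pi=I-(\Pi')^T$ but satisfies $\Pi\,\dd\Pi=0$ and $\|\dd\Pi\|=\|\dd\Pi'\|$), and tracking the constant $C=2e$ through Treil's Lemma 2.2. A self-contained proof must supply Treil's embedding lemma for non-analytic families $\xi=\Pi h$; the direct Green's-formula computation you sketch cannot produce it.

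Your reduction of the second estimate to the first is, by contrast, essentially correct and in the spirit of Treil's own derivation: the identities $\dd\eta=\ol{f^{1/2}}\,\dd\xi$, $\operatorname{Re}\langle\widetilde\Delta\eta,\eta\rangle=-|f|\,\|\dd\xi\|^2$, Green's formula applied to $|f|\,\|\xi\|^2$ (which vanishes at the origin), and the bounds $\|f\|_\infty\le\psi(0)\le 1$ and $\|\dd\xi\|=\|(\dd\Pi)\xi\|\le\|\dd\Pi\|\,\|\xi\|$ do yield $(1+2e)\|\xi\|_2^2\le 4e\|\xi\|_2^2$, conditionally on the first estimate. Two small repairs are needed: since $|f|\,\|\xi\|^2$ is not $C^2$ at the zeros of $f$, you must invoke the excised Green's formula of Lemma \ref{lem:green} (the gradient is bounded near the zeros, as in the proof of Lemma \ref{lem:carleson_remaining}) rather than Lemma \ref{lem:Green}; and the computation should be carried out for $h$ in the dense subspace $D$ of boundary-regular co-analytic functions.
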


\begin{proof}
  This follows from \cite[Corollary 2.8]{Treil07} and its proof. Some care must be taken since our
  $\Pi$ is different from the $\Pi$ in \cite{Treil07}.
  To see that \cite[Corollary 2.8]{Treil07} also applies to our $\Pi$,
  note that the results in \cite[Section 2]{Treil07}
  up to Lemma 2.7 there hold
  for any family of projections $\Pi$ with $\Pi \dd \Pi = 0$, which our $\Pi$
  satisfies by Lemma \ref{lem:pi_formulae}.
  Moreover, the $\Pi$ in \cite[Corollary 2.8]{Treil07}, let us call it
  $\Pi'$, is related to our $\Pi$  via $\Pi = I - (\Pi')^T$, so $||\dd \Pi|| = ||\dd \Pi'||$.
  Thus, \cite[Corollary 2.8]{Treil07} also applies to our $\Pi$.
  
  (Whereas \cite[Corollary 2.8]{Treil07}
  is stated with constants $1$ and $2$ in the embeddings, respectively, the proof
  actually gives worse constants $C$ and $2 C$, respectively. Here $C$ is the constant
  of \cite[Lemma 2.2]{Treil07}, which can be taken to be $2 e$,
  see the proof of Lemma \ref{lem:carleson}.)
\end{proof}

To adapt the route of \cite{TW05}, there is one missing Carleson measure estimate.
It corresponds to the argument at the end of \cite[Section 3]{TW05},
see also the remarks in \cite[Section 4]{Treil07}.

We begin with a straightforward modification of Green's formula, cf.\ \cite[Lemma 2]{Trent05} and
the footnote on page 234 of \cite{Treil07}.

\begin{lem}
  \label{lem:green}
  Let $u$ be a continuous function on $\ol{\bD}$ which extends to be $C^2$ in a neighborhood
  of $\ol{\bD}$, except at possibly finitely many points $w_1,\ldots,w_n \in \ol{\bD}$.
  Suppose that the gradient of $u$ is bounded near each $w_j$. For $\varepsilon > 0$, let
  \begin{equation*}
    \Omega_\varepsilon = \bD \setminus \bigcup_{j=1}^n \ol{D_\varepsilon(w_j)}.
  \end{equation*}
  Then
  \begin{equation*}
    \lim_{\varepsilon \searrow 0} \int_{\Omega_\varepsilon} \widetilde \Delta u \, d \mu
    = \int_{\bT} u \, d m - u(0).
  \end{equation*}
\end{lem}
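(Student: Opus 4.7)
The approach I would take is a direct generalization of the standard proof of Lemma \ref{lem:Green} via Green's second identity, now applied on the region $\Omega_\varepsilon$ with the Green's kernel $v(z) = \frac{1}{2\pi} \log |z|^{-1}$. Since $d\mu = 4 v \, dA$ and $\widetilde \Delta = \frac{1}{4} \Delta$, the volume integrand is $\widetilde \Delta u \, d\mu = v \Delta u \, dA$, and $v$ is harmonic away from $0$, so Green's second identity gives
\begin{equation*}
  \int_{\Omega_\varepsilon \setminus \overline{D_\delta(0)}} \widetilde \Delta u \, d\mu
  = \int_{\partial (\Omega_\varepsilon \setminus \overline{D_\delta(0)})} (v \, \partial_n u - u \, \partial_n v) \, d\sigma,
\end{equation*}
provided $\delta > 0$ is small enough that $\overline{D_\delta(0)}$ is disjoint from the $\overline{D_\varepsilon(w_j)}$. (If $0$ coincides with some $w_j$ we omit the auxiliary disk and let $\partial D_\varepsilon(0)$ play its role.) On $\Omega_\varepsilon$ the function $u$ is genuinely $C^2$, so this identity is valid as it stands.

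Next I would decompose the boundary into four pieces and identify each contribution. On the circle at the origin, the computation in the proof of Lemma \ref{lem:Green} shows that the boundary integral tends to $-u(0)$ as $\delta \to 0$, using only continuity of $u$ at $0$. On $\bT \setminus \bigcup_j D_\varepsilon(w_j)$, we have $v = 0$ and $\partial_n v = -1$, so this piece contributes $\int_{\bT \setminus \bigcup_j D_\varepsilon(w_j)} u \, dm$, which tends to $\int_\bT u \, dm$ as $\varepsilon \to 0$. The new work, and the only place where the hypothesis on $u$ enters beyond continuity, is to control the contribution of each arc $\partial D_\varepsilon(w_j) \cap \overline{\bD}$. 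Its length is $O(\varepsilon)$; since $w_j \neq 0$ (or is handled as above), both $v$ and $\nabla v$ are bounded on a neighborhood of $w_j$, and the hypothesis that $\nabla u$ is bounded near $w_j$ together with continuity of $u$ bounds both $u$ and $\partial_n u$ on this arc. Hence the integrand is uniformly bounded and the contribution is $O(\varepsilon)$.

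The main obstacle is the careful bookkeeping when some $w_j$ lies on $\bT$: removing $\overline{D_\varepsilon(w_j)}$ deletes an arc of $\bT$ of length $O(\varepsilon)$ while inserting a semicircular arc $\partial D_\varepsilon(w_j) \cap \bD$ into the boundary of $\Omega_\varepsilon$. Both corrections are controlled by the same uniform bound argument and are $O(\varepsilon)$, but the orientations and normal directions must be tracked consistently. Modulo this routine but fiddly bookkeeping, letting first $\delta \searrow 0$ (dominated convergence on the left, explicit limit on $\partial D_\delta(0)$ on the right) and then $\varepsilon \searrow 0$ yields the claimed identity.
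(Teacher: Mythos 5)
Your proposal is correct and takes essentially the same approach as the paper: Green's second identity with the logarithmic kernel on $\Omega_\varepsilon$ minus a small disc about the origin, with the gradient hypothesis controlling the $O(\varepsilon)$ contributions of the arcs $\partial D_\varepsilon(w_j)$. The only minor imprecision is the claim that the circle at the origin is handled ``using only continuity of $u$ at $0$'' --- the term $\int v\,\partial_n u\,ds$ there also requires $\nabla u$ to be bounded near $0$, which is available either from $C^2$-smoothness or, if $0$ is one of the $w_j$, from the hypothesis.
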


\begin{proof}
  This is proved just like the usual Green's formula by applying the identity
  \begin{equation*}
    \int_{G_{\varepsilon}} (v \Delta u - u \Delta v) d A = \int_{\partial G_{\varepsilon}} \left (v \frac{\partial u}{\partial n} - u \frac{\partial v}{\partial n} \right) d s
  \end{equation*}
  with $v = \log \frac{1}{|z|}$ and $G_\varepsilon = \Omega_\varepsilon \setminus \ol{D_{\varepsilon}(0)}$.
  The hypothesis that the gradient of $u$ is bounded near each $w_j$
  is used to show that the integrals
  \begin{equation*}
    \int_{\partial D_\varepsilon(w)} \frac{\partial u}{\partial n} v \, d s
  \end{equation*}
  tend to zero as $\varepsilon \to 0$ for $w = w_j$ or $w = 0$.
\end{proof}

To obtain the missing Carleson measure estimate, we will
use the following variant of a result of Uchiyama. It is a modification of \cite[Theorem 2.1]{TW05}. The main
difference to that result is that the function $\alpha$ below is not assumed to be subharmonic.
Recall that $\widetilde \Delta = \frac{1}{4} \Delta = \dd \db$.

\begin{lem}
  \label{lem:carleson_non_subharm}
  Let $\alpha : \ol{\bD} \to [0,1]$ be a continuous function which extends
  to be $C^2$ in a neighborhood of $\ol{\bD}$ except at possibly finitely
  many points $w_1,\ldots,w_n \in \ol{\bD}$. Suppose that the gradient
  of $\alpha$ is bounded near each $w_j$.
  For $\varepsilon > 0$, let
  \begin{equation*}
    \Omega_\varepsilon = \bD \setminus \bigcup_{j=1}^n \ol{D_\varepsilon(w_j)}.
  \end{equation*}
  Then for all $g \in H^2$ that are analytic in a neighborhood of $\ol{\bD}$,
  \begin{equation*}
    \limsup_{\varepsilon \searrow 0 } \int_{\Omega_\varepsilon} e^{\alpha(z)} \widetilde \Delta \alpha(z) |g(z)|^2 \, d \mu(z) \le e ||g||_2^2.
  \end{equation*}
\end{lem}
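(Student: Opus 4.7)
The plan is to reduce this to an application of the generalized Green's formula (Lemma \ref{lem:green}) via the pointwise identity
\begin{equation*}
  \widetilde\Delta(e^\alpha |g|^2) = e^\alpha \widetilde\Delta \alpha \, |g|^2 + e^\alpha \, |g' + g\, \dd \alpha|^2,
\end{equation*}
which in particular yields the pointwise bound $e^\alpha \widetilde\Delta \alpha \, |g|^2 \le \widetilde\Delta(e^\alpha |g|^2)$. This is the Uchiyama-type trick: one packages the zeroth-order term $e^\alpha \widetilde\Delta\alpha |g|^2$ together with a nonnegative squared gradient term into a single Laplacian.

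To verify the identity, I will simply compute $\partial\db(e^\alpha |g|^2)$ using the product rule, the fact that $\db \alpha = \overline{\dd \alpha}$ because $\alpha$ is real-valued, and the holomorphy of $g$ (so $\db g = 0$, $\dd |g|^2 = g' \overline{g}$, $\db |g|^2 = g \overline{g'}$, and $\widetilde\Delta |g|^2 = |g'|^2$). Expanding $\widetilde\Delta e^\alpha = e^\alpha(\widetilde\Delta \alpha + |\dd \alpha|^2)$ and collecting the cross terms precisely produces $e^\alpha |g' + g\, \dd \alpha|^2$.

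Once the pointwise inequality is established, I apply Lemma \ref{lem:green} to $u = e^\alpha |g|^2$. The hypotheses are satisfied: $u$ is continuous on $\ol{\bD}$ and $C^2$ away from $w_1,\ldots,w_n$, and its gradient is bounded near each $w_j$ because $g$ is smooth on a neighborhood of $\ol{\bD}$, $e^\alpha$ is uniformly bounded, and $\dd \alpha$ is bounded near the singular points by assumption. Lemma \ref{lem:green} then gives
\begin{equation*}
  \lim_{\varepsilon \searrow 0} \int_{\Omega_\varepsilon} \widetilde\Delta(e^\alpha |g|^2) \, d\mu
  = \int_\bT e^\alpha |g|^2 \, dm - e^{\alpha(0)} |g(0)|^2.
\end{equation*}
Combining with the pointwise inequality, dropping the nonnegative $e^{\alpha(0)}|g(0)|^2$ term, and using $\alpha \le 1$ to bound $e^\alpha \le e$ on $\bT$ yields
\begin{equation*}
  \limsup_{\varepsilon \searrow 0} \int_{\Omega_\varepsilon} e^\alpha \widetilde\Delta \alpha \, |g|^2 \, d\mu \le e \int_\bT |g|^2 \, dm = e \|g\|_2^2,
\end{equation*}
as required.

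There is no genuine obstacle here; the only point requiring a little care is the verification that the pointwise identity really does hold with no sign ambiguity, and that Lemma \ref{lem:green} is legitimately applicable (i.e., the boundedness of $\nabla u$ near the singular $w_j$). Since $\widetilde\Delta \alpha$ need not be nonnegative, the inequality $e^\alpha \widetilde\Delta \alpha |g|^2 \le \widetilde\Delta(e^\alpha |g|^2)$ relies essentially on the $|g' + g \dd\alpha|^2$ remainder being nonnegative — this is exactly why the "Uchiyama trick" works in this non-subharmonic setting and is the conceptual heart of the lemma.
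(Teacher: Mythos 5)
Your proposal is correct and is essentially identical to the paper's own argument: the same pointwise identity $\widetilde\Delta(e^\alpha|g|^2) = e^\alpha\widetilde\Delta\alpha\,|g|^2 + e^\alpha|\dd\alpha\, g + g'|^2$, followed by the modified Green's formula (Lemma \ref{lem:green}) applied to $u = e^\alpha|g|^2$ and the bound $e^\alpha \le e$ on $\bT$. Nothing to add.
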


\begin{proof}
  As in \cite[Theorem 2.1]{TW05}, a computation shows that
  \begin{equation*}
    \widetilde \Delta (e^\alpha |g|^2) = e^\alpha \widetilde \Delta \alpha |g|^2 + e^\alpha |\dd \alpha g + \dd g|^2
    \ge e^\alpha \widetilde \Delta \alpha |g|^2
  \end{equation*}
  at every point where $\alpha$ is $C^2$.
  Moreover, since the gradient of $\alpha$ is bounded near each $w_j$, so is the gradient of $e^\alpha |g|^2$.
  Thus, Green's formula in the form of Lemma \ref{lem:green} shows that
  \begin{align*}
    \limsup_{\varepsilon \searrow 0} \int_{\Omega_\varepsilon} e^{\alpha} \widetilde \Delta \alpha |g|^2 \, d \mu
    &\le \limsup_{\varepsilon \searrow 0} \int_{\Omega_\varepsilon} \widetilde \Delta (e^\alpha |g|^2) \, d \mu \\
    &= \int_\bT e^\alpha |g|^2 \, dm - e^{\alpha(0)} |g(0)|^2 \\
    &\le e \int_{\bT} |g|^2 \, dm = e ||g||^2.
    \qedhere
  \end{align*}
\end{proof}

To apply Lemma \ref{lem:carleson_non_subharm}, we require the following computation.
Observe again that the expression below involving $f^{-1/2}$ is independent
of the choice of branch of square root.

\begin{lem}
  \label{lem:measure_laplace}
  Let $f,F$ be as in Proposition \ref{prop:main_estimate}, assume
  that $f$ is not identically zero, and define
  \begin{equation*}
    \alpha = \frac{|f|}{||F||^2} \quad \text{ and } \quad
    \beta = \log( ||F||^2).
  \end{equation*}
  Then
  \begin{equation*}
  \frac{|f|^2}{||F||^6} \left| (F f^{-1/2})' F^* \right|^2
  = \widetilde \Delta \alpha + \frac{|f|}{||F||^2} \widetilde \Delta \beta
  \end{equation*}
  away from the zeros of $f$.
\end{lem}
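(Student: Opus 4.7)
The plan is to exploit the identity
\[
  \widetilde \Delta e^\gamma = e^\gamma \bigl( \widetilde \Delta \gamma + |\dd \gamma|^2 \bigr),
\]
valid for any real-valued $C^2$ function $\gamma$; this is immediate from $\db e^\gamma = e^\gamma \db \gamma$ and a second application of $\dd$, together with the fact that $\dd \gamma \cdot \db \gamma = |\dd \gamma|^2$ when $\gamma$ is real. Away from the zeros of $f$, I set $\gamma = \log \alpha = \log |f| - \beta$; this is a well-defined real $C^2$ function on that set.

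Since $\log |f|$ is harmonic away from the zeros of $f$, there $\widetilde \Delta \gamma = - \widetilde \Delta \beta$. Plugging $\alpha = e^\gamma$ into the identity above and rearranging therefore gives
\[
  \widetilde \Delta \alpha + \frac{|f|}{\|F\|^2} \widetilde \Delta \beta = \alpha |\dd \gamma|^2,
\]
so the claim is reduced to the pointwise identity
\[
  \alpha |\dd \gamma|^2 = \frac{|f|^2}{\|F\|^6} \bigl| (Ff^{-1/2})' F^* \bigr|^2.
\]

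For this last step I would compute $\dd \gamma$ directly. Using $\log|f| = \tfrac12(\log f + \log \bar f)$ and the fact that $\bar f$ and $F^*$ are anti-analytic yields
\[
  \dd \gamma = \frac{f'}{2f} - \frac{F' F^*}{\|F\|^2} = - \frac{2 f F' F^* - f' \|F\|^2}{2 f \|F\|^2}.
\]
A one-line product rule computation shows $(Ff^{-1/2})' F^* = \tfrac{2 f F' F^* - f' \|F\|^2}{2 f^{3/2}}$, so
\[
  \dd \gamma = -\frac{f^{1/2}}{\|F\|^2} (Ff^{-1/2})' F^*,
\]
whence $|\dd \gamma|^2 = \tfrac{|f|}{\|F\|^4}\bigl|(Ff^{-1/2})' F^*\bigr|^2$. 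Multiplying by $\alpha = |f|/\|F\|^2$ gives the desired formula.

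I do not anticipate a serious obstacle; the argument is essentially an organized bookkeeping of two Wirtinger derivatives. The only mild points are the branch-of-square-root ambiguity in $f^{1/2}$ (which cancels in the modulus, as already remarked before the lemma) and the local failure of smoothness of $\log|f|$ at the zeros of $f$, which is precisely why the identity is stated only off that zero set.
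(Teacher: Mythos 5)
Your proof is correct. It takes a genuinely different, and arguably cleaner, route than the paper's. The paper verifies the identity by two applications of the chain-rule formula $\widetilde \Delta h(\|G\|^2) = h''(\|G\|^2)|G'G^*|^2 + h'(\|G\|^2)\|G'\|^2$ with $G = Ff^{-1/2}$: once with $h(t) = 1/t$ to expand $\widetilde\Delta\alpha$, and once with $h(t) = \log t$ (together with harmonicity of $\log|f|$) to identify the leftover terms with $\frac{|f|}{\|F\|^2}\widetilde\Delta\beta$. You instead write $\alpha = e^{\gamma}$ with $\gamma = \log|f| - \beta$ and invoke $\widetilde\Delta e^\gamma = e^\gamma(\widetilde\Delta\gamma + |\dd\gamma|^2)$, so that harmonicity of $\log|f|$ makes $\widetilde\Delta\beta$ cancel identically and the whole lemma collapses to the single first-order computation $\dd\gamma = -\frac{f^{1/2}}{\|F\|^2}(Ff^{-1/2})'F^*$, which you carry out correctly. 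What your approach buys is that no second derivatives of $F$ or $f$ ever need to be expanded and the explicit formula for $\widetilde\Delta\beta$ is never needed; what the paper's approach buys is that it produces that explicit formula $\widetilde\Delta\beta = \|\dd\Pi\|^2$ along the way, which is reused in the proof of Lemma \ref{lem:carleson_remaining} (though it can of course be derived separately, as the paper in fact also does there). Your remarks on the branch ambiguity of $f^{1/2}$ and on restricting to the complement of the zero set of $f$ address the only delicate points.
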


\begin{proof}
  We will use the following general formula,
  which is proved by direct computation:
  If $h$ is $C^2$-function and $G$ is a row of analytic functions,
  then
  \begin{equation*}
    \widetilde \Delta h( ||G||^2)
    = h''(||G||^2) | G' G^*|^2 + h'(||G||^2) ||G'||^2.
  \end{equation*}
  Applying this formula with $h(t) = \frac{1}{t}$ and $G = F f^{-1/2}$, we find that
  \begin{align*}
    \widetilde \Delta \alpha &= \widetilde \Delta \frac{1}{||F f^{-1/2}||^2} \\
    &= \frac{2 |(F f^{-1/2})' (F f^{-1/2})^{*}|^2}{||F f^{-1/2}||^6}
    - \frac{ || (F f^{-1/2})'||^2}{||F f^{-1/2}||^4} \\
    &= 2 \frac{|f|^2}{||F||^6} |(F f^{-1/2})' F^*|^2 - \frac{|f|^2}{||F||^4} ||(F f^{-1/2})'||^2 \\
    &= \frac{|f|^2}{||F||^6} |(F f^{-1/2})' F^*|^2 \\
    & \quad - \frac{|f|}{||F||^2} \left( \frac{|f|}{||F||^2} ||(F f^{-1/2})'||^2
    - \frac{|f|}{||F||^4} |(F f^{-1/2})' F^*|^2 \right).
  \end{align*}
  It remains to show that
  \begin{equation*}
    \widetilde \Delta \beta = \frac{|f|}{||F||^2} ||(F f^{-1/2})'||^2
    - \frac{|f|}{||F||^4} |(F f^{-1/2})' F^*|^2.
  \end{equation*}
  One way of seeing this is to observe that since $f$ is analytic,
  $\widetilde \Delta \log( |f|^{-1}) = 0$, hence
  \begin{equation*}
    \widetilde \Delta \beta = \widetilde \Delta \log( ||F||^2) = \widetilde \Delta ( \log (||F f^{-1/2}||^2)).
  \end{equation*}
  Applying the formula at the beginning of the proof with $G = F f^{-1/2}$ and $h(t) = \log(t)$,
  we see that
  \begin{align*}
    \widetilde \Delta ( \log (||F f^{-1/2}||^2)) &=
    - \frac{ | (F f^{-1/2})' (F f^{-1/2})^*|^2}{||F f^{-1/2}||^4} + \frac{||(F f^{-1/2})'||^2}{||F f^{-1/2}||^2}  \\
    &= \frac{|f|}{||F||^2} || (F f^{-1/2})'||^2 - \frac{|f|}{||F||^4} |(F f^{-1/2})' F^*|^2,
  \end{align*}
  which finishes the proof.
\end{proof}

The following lemma contains the missing Carleson measure estimate and is the main new ingredient
of our proof.

\begin{lem}
  \label{lem:carleson_remaining}
  Let $f,F$ be as in Proposition \ref{prop:main_estimate} and assume
  that $f$ is not identically zero. Then for $g \in H^2$, the estimate
\begin{equation*}
  \int_{\bD} \frac{|f|^2}{||F||^6} \left| (F f^{-1/2})' F^* \right|^2 |g|^2 \, d \mu
  \le (2 e^2 + e) ||g||_2^2
\end{equation*}
holds.
\end{lem}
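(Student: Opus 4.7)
The plan is to apply Lemma \ref{lem:measure_laplace} to rewrite the integrand as $\widetilde \Delta \alpha + \alpha \widetilde \Delta \beta$, where $\alpha = |f|/\|F\|^2$ and $\beta = \log \|F\|^2$, and then bound the two resulting pieces separately using Lemma \ref{lem:carleson_non_subharm} and Lemma \ref{lem:carleson}. The hypothesis $|f| \le \varphi(\|F\|) \le \|F\|^2$ (using $\psi \le 1$) forces $\alpha \in [0,1]$, so I can freely insert a harmless factor $e^\alpha \in [1,e]$ in the integrand to put myself in the situation of Lemma \ref{lem:carleson_non_subharm}.

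For the $\widetilde \Delta \alpha$ piece, I would first verify that $\alpha$ meets the hypotheses of Lemma \ref{lem:carleson_non_subharm}: it is continuous on $\ol{\bD}$ with values in $[0,1]$, is $C^\infty$ off the finite zero set $\{w_1,\ldots,w_k\}$ of $f$ (since $\|F\|^2$ is smooth and bounded below on $\ol{\bD}$), and has bounded gradient near each $w_j$, because $|f|$ vanishes there to a positive integer order. Lemma \ref{lem:carleson_non_subharm} then yields
\begin{equation*}
 \limsup_{\varepsilon \searrow 0} \int_{\Omega_\varepsilon} e^\alpha \widetilde \Delta \alpha \, |g|^2 \, d\mu \le e\,\|g\|_2^2,
\end{equation*}
where $\Omega_\varepsilon = \bD \setminus \bigcup_j \ol{D_\varepsilon(w_j)}$.

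For the $\alpha \widetilde \Delta \beta$ piece, I would identify $\widetilde \Delta \beta = \widetilde \Delta \log \|F\|^2 = (\|F\|^2 \|F'\|^2 - |F' F^*|^2)/\|F\|^4 = \|\dd \Pi\|^2$, using Lemma \ref{lem:pi_formulae}. The hypothesis on $f$ gives $\alpha \le \psi(\log \|F\|^{-2})$, so combining with $e^\alpha \le e$ and Lemma \ref{lem:carleson},
\begin{equation*}
 \int_\bD e^\alpha \alpha \widetilde \Delta \beta \, |g|^2 \, d\mu \le e \int_\bD \psi(\log \|F\|^{-2}) \|\dd \Pi\|^2 |g|^2 \, d\mu \le 2 e^2 \|g\|_2^2.
\end{equation*}

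The main obstacle is the bookkeeping around the zeros of $f$: Lemma \ref{lem:measure_laplace} is only valid off these finitely many points, and $\widetilde \Delta \alpha$ need not be non-negative, so monotone convergence is not directly available for that individual piece. I would circumvent this by integrating the pointwise identity $e^\alpha \cdot (\text{integrand}) = e^\alpha \widetilde \Delta \alpha + e^\alpha \alpha \widetilde \Delta \beta$ over $\Omega_\varepsilon$: monotone convergence applies to the left-hand side and to the second right-hand term (both non-negative), while the $\limsup$ of the remaining term is controlled by the previous step. Rearranging gives $\int_\bD e^\alpha \cdot (\text{integrand})\,|g|^2 \, d\mu \le (e + 2e^2)\|g\|_2^2$, and dropping the factor $e^\alpha \ge 1$ from the left-hand side yields the desired estimate $(2e^2 + e)\|g\|_2^2$.
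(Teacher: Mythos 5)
Your proposal is correct and follows essentially the same route as the paper: the same decomposition via Lemma \ref{lem:measure_laplace}, the insertion of the factor $e^\alpha$ to invoke Lemma \ref{lem:carleson_non_subharm} for the $\widetilde\Delta\alpha$ piece, the identification $\widetilde\Delta\beta = \|\dd\Pi\|^2$ together with Lemma \ref{lem:carleson} for the other piece, and the same limiting argument over $\Omega_\varepsilon$, yielding the identical constant $2e^2+e$. The only (minor) point the paper makes explicit that you elide is the preliminary reduction, via dilations and Fatou, to $g$ analytic in a neighborhood of $\ol{\bD}$, which is needed to apply Lemma \ref{lem:carleson_non_subharm}.
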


\begin{proof}
  By considering dilations of $g$ and applying Fatou's lemma, we see
  that it suffices to prove the lemma for $g \in H^2$
  which are analytic in a neighborhood of $\ol{\bD}$.
  Let $\alpha$ and $\beta$ be defined as in Lemma \ref{lem:measure_laplace}.
  Let $w_1,\ldots,w_n$ denote the zeros of $f$ inside of $\ol{\bD}$.
  For $\varepsilon > 0$, let
  \begin{equation*}
    \Omega_\varepsilon = \bD \setminus \bigcup_{j=1}^n \ol{D_\varepsilon(w_j)}.
  \end{equation*}
  Let $g \in H^2$ be analytic in a neighborhood of $\ol{\bD}$.
  Using Lemma \ref{lem:measure_laplace}, we see that
  \begin{align*}
  \int_{\Omega_\varepsilon} \frac{|f|^2}{||F||^6} \left| (F f^{-1/2})' F^* \right|^2 |g|^2 \, d \mu
  &\le \int_{\Omega_\varepsilon} e^{\alpha}
  \frac{|f|^2}{||F||^6} \left| (F f^{-1/2})' F^* \right|^2 |g|^2 \, d \mu \\
  & = \int_{\Omega_\varepsilon} e^\alpha (\widetilde \Delta \alpha + \frac{|f|}{||F||^2} \widetilde \Delta \beta ) |g|^2 \, d \mu \\
  &\le \int_{\Omega_\varepsilon} e^\alpha \widetilde \Delta \alpha |g|^2 \, d \mu
  +  \int_{\bD} e^\alpha \frac{|f|}{||F||^2} \widetilde \Delta \beta |g|^2 \, d \mu.
  \end{align*}
  The assumption of Proposition \ref{prop:main_estimate} implies that
  $\alpha = \frac{|f|}{||F||^2}$ satisfies $0 \le \alpha \le 1$.
  Moreover, direct computation shows that
  \begin{equation*}
    \widetilde \Delta \beta = \frac{||F||^2 ||F'||^2 - |F^* F'|^2}{||F||^4} = ||\partial \Pi||^2,
  \end{equation*}
  where the second equality follows from Lemma \ref{lem:pi_formulae}. Thus,
  Lemma \ref{lem:carleson} implies that the second summand admits the estimate
  \begin{align*}
    \int_{\bD} e^\alpha \frac{|f|}{||F||^2} \widetilde \Delta \beta |g|^2 \, d \mu
    \le e \int_{\bD} \psi (\log ||F||^{-2}) \widetilde \Delta \beta |g|^2 \, d \mu \le 2 e^2 ||g||_2^2.
  \end{align*}
  To estimate the first summand, we use Lemma \ref{lem:carleson_non_subharm}. Once we observe
  that the gradient of $\alpha$ is bounded near each $w_j$,
  Lemma \ref{lem:carleson_non_subharm} will imply that
  \begin{equation*}
    \limsup_{\varepsilon \searrow 0}
  \int_{\Omega_\varepsilon} e^\alpha \widetilde \Delta \alpha |g|^2 \, d \mu \le e ||g||_2^2,
  \end{equation*}
  so that an application of the monotone convergence theorem will finish the proof.

  To see that the gradient of $\alpha$ is bounded near each $w_j$, observe that
  \begin{align*}
    |\db \alpha| = |\dd \alpha| &= |\dd ( (F f^{-1/2}) (F f^{-1/2})^*)^{-1} )| \\
    &\le ||F f^{-1/2}||^{-3} \left\| F' f^{-1/2} - \frac{1}{2} F f^{-3/2} f' \right\| \\
    &\le ||F||^{-3} \left( ||F' f|| + \frac{1}{2} ||F f'|| \right),
  \end{align*}
  and recall that $||F||$ does not vanish on $\ol{\bD}$.
\end{proof}

\section{Estimating the integral}
\label{sec:integral_estimate}

In this section, we will prove Proposition \ref{prop:main_estimate}, and hence Theorem \ref{thm:main}. The arguments are again closely related to the ones in \cite{TW05}.

Under the assumptions of Proposition \ref{prop:main_estimate}, we would like to estimate the integral
\begin{equation*}
   \int_{\bD} \dd [ \langle \db \Phi f g, \xi \rangle ] d \mu
   = \int_{\bD} \dd \left[ \langle \db \Phi f^{1/2}, \ol{f^{1/2}} \xi \rangle g \right] d \mu.
\end{equation*}
This integral is the sum of three terms
\begin{align*}
  \textup{I} &= \int_{\bD} \langle \dd [ \db \Phi f^{1/2}], \ol{f^{1/2}} \xi \rangle g \, d \mu, \\
    \textup{II} &= \int_{\bD} \langle \db \Phi f^{1/2}, \db[ \ol{f^{1/2}}  \xi] \rangle g \, d \mu, \\
  \textup{III} &= \int_{\bD} \langle \db \Phi , \xi \rangle f g' \, d \mu.
\end{align*}
Note that the integrands are defined away from the finitely many zeros of $f$,
and do not depend on the choice of branch of square root. All computations
below take place away from the zeros of $f$.

\subsection{Estimating I}

To estimate $\textup{I}$, we note that
\begin{equation*}
  \langle \dd  [ \db \Phi f^{1/2}], \ol{f^{1/2}} \xi \rangle
  = \langle \dd \db \Phi, \xi  \rangle f
  + \tfrac{1}{2} \langle \db \Phi, \xi \rangle f'.
\end{equation*}
For the first summand, we use the identity
\begin{equation*}
  \dd \db \Phi = \dd \Pi \db \Phi + (\dd \Pi)^* \Phi F' \Phi
\end{equation*}
of Lemma \ref{lem:pi_formulae}. Since $(\dd \Pi)^* \xi = (\dd \Pi)^* \Pi \xi = 0$, again by Lemma \ref{lem:pi_formulae}, it follows that
\begin{equation*}
  \langle \dd \db \Phi, \xi \rangle = \langle \Phi F' \Phi, \dd \Pi \xi \rangle
  = ||F||^{-4} \langle F^* F' F^*, (\dd \Pi) \xi \rangle.
\end{equation*}
(cf.\ the computation at the bottom of \cite[p.\ 151]{TW05}).
For the second summand, recall
from Lemma \ref{lem:pi_formulae} that
$\db \Phi = - (\dd \Pi)^* \Phi = - ||F||^{-2} (\dd \Pi)^* F^*$.
Combining these equalities, we conclude that
\begin{align*}
  \langle \dd  [ \db \Phi f^{1/2}], \ol{f^{1/2}} \xi \rangle
  &= ||F||^{-4} \langle F^* F' F^*, (\dd \Pi) \xi \rangle f 
  - \tfrac{1}{2} ||F||^{-2} \langle F^*, (\dd \Pi) \xi \rangle f'\\
  &= ||F||^{-4} \langle F^* F' F^* f - \tfrac{1}{2} F^* F F^* f', (\dd \Pi) \xi \rangle\\
  & = ||F||^{-4} f^{3/2} \langle F^* \left(Ff^{-1/2}\right)'F^* , (\dd \Pi) \xi \rangle.
\end{align*}

Thus, by Cauchy-Schwarz,
\begin{align*}
  |\textup{I}| &\le
  \int_{\bD} ||F||^{-3} |f|^{3/2} | (F f^{-1/2})' F^*| \, ||\dd \Pi|| \, ||\xi|| \, |g| \, d \mu \\
  &\le
  \left( \int_\bD |f| \, || \dd \Pi ||^2 \, || \xi||^2 \, d \mu \right)^{1/2}
  \left( \int_\bD |f|^2 ||F||^{-6} | (F f^{-1/2})' F^*|^2 \, |g|^2 \, d \mu  \right)^{1/2}.
\end{align*}
Since $|f| \le \varphi(||F||)$, the first factor is dominated by $\sqrt{2 e} ||\xi||_2$ by Lemma \ref{lem:d_pi_non_analytic}.
The second factor is bounded by $\sqrt{2 e^2 + e} ||g||_2 \le \sqrt{2 e^2 + e}$ by Lemma \ref{lem:carleson_remaining}.
Thus,
\begin{equation*}
  |\textup{I}| \le e \sqrt{4 e + 2} ||\xi||_2.
\end{equation*}

\subsection{Estimating II}

From Cauchy-Schwarz and the assumption on $f$, we deduce that
\begin{equation*}
  |\textup{II}| \le \left(  \int_\bD ||\db \Phi||^2 \varphi(\|F\|) |g|^2 \, d \mu \right)^{1/2}
  \left( \int_\bD ||\db [\ol{f^{1/2}} \xi] ||^2 \, d \mu \right)^{1/2}.
\end{equation*}
Lemma \ref{lem:d_pi_non_analytic} shows that the second term is bounded by $2 \sqrt{e} ||\xi||_2$.
Moreover, from Lemma \ref{lem:pi_formulae}, we see that
\begin{equation}
  \label{eqn:Phi_Pi}
  ||\dd \Pi||^2 = \|F\|^2 || \db \Phi||^2.
\end{equation}
Thus, the square of the first term is equal to
\begin{equation*}
  \int_\bD ||\dd \Pi||^2 \psi( \log \|F\|^{-2}) |g|^2 \, d \mu \le 2 e ||g||^2_2 \le 2 e
\end{equation*}
by Lemma \ref{lem:carleson}.
Therefore,
\begin{equation*}
  |\textup{II}| \le 2 \sqrt{2} e ||\xi||_2.
\end{equation*}

\subsection{Estimating III}

Once again by Cauchy-Schwarz and the assumption on $f$, we have
\begin{equation*}
  |\textup{III}| \le \left( \int_{\bD} || \db \Phi||^2 ||\xi||^2 \varphi(\|F\|)^2 \, d \mu \right)^{1/2}
  \left( \int_{\bD} |g'|^2 d \mu \right)^{1/2}.
\end{equation*}
Since $g$ is analytic, $\dd \db |g|^2 = |g'|^2$, so Green's formula (Lemma \ref{lem:Green}), combined with Fatou's lemma, shows that the second
term is dominated by $||g||_2 \le 1$.
By Equation \eqref{eqn:Phi_Pi}, the square of the first term is equal to
\begin{align*}
  \int_{\bD} ||\dd \Pi||^2 ||\xi||^2 \|F\|^{-2} \varphi(\|F\|)^2 \, d \mu 
   &= \int_{\bD} ||\dd \Pi||^2 ||\xi||^2 \|F\|^{2} \psi(\|F\|)^2 \, d \mu  \\
  &\le \int_{\bD} ||\dd \Pi||^2 ||\xi||^2 \varphi(\|F\|) \, d \mu \\
  &\le 2 e ||\xi||^2_2,
\end{align*}
where we have used that $0 \le \psi \le 1$ in the first inequality and Lemma \ref{lem:d_pi_non_analytic}
in the second inequality.
Thus,
\begin{equation*}
  |\textup{III}| \le \sqrt{2 e} ||\xi||_2.
\end{equation*}

Combining the three estimates, we see that Proposition \ref{prop:main_estimate} holds.

\bibliographystyle{amsplain}
\bibliography{literature}

\end{document}